\newtheorem{theorem}{Theorem}[section]
\newtheorem{lemma}[theorem]{Lemma}
\newtheorem{corollary}[theorem]{Corollary}
\newtheorem{proposition}[theorem]{Proposition}
\theoremstyle{remark}
\newtheorem{remark}[theorem]{Remark}
\newtheorem{definition}[theorem]{Definition}
\numberwithin{equation}{section}
\def\Ex{\mathbb{E}} 
\def\Pr{\mathbb{P}} 
\newcommand*{\er}{\mathbb{R}} 
\newcommand*{\eps}{\varepsilon} 
\DeclareMathOperator*{\sgn}{sgn} 
\newcommand*{\ind}{{\mathbf 1}} 
\def\maxi{\max_{1\le i \le m}} 
\def\maxj{\max_{1\le j \le n}} 
\def\maxij{\max_{\substack{1\le i \le m \\ 1\le j \le n}}} 
\def\Cov{\operatorname{Cov}} 
\def\Id{\operatorname{Id}} 
\title[Estimates of norms of log-concave random matrices]{Estimates of norms of log-concave random matrices with dependent entries}
\author{Marta Strzelecka}
\date{February 4, 2019}
\address{Institute of Mathematics, University of Warsaw, Banacha 2, 02--097 Warsaw, Poland.}
\email{martast@mimuw.edu.pl}
\thanks{The research was supported by the National Science Centre, Poland via the grants 2015/19/N/ST1/02661 and 2018/28/T/ST1/00001.}
\keywords{Random matrices, operator norm,  log-concave vectors, unconditional vectors.}
\subjclass[2010]{60B20, 46B09, 15B52}
\begin{document}

\maketitle
%
%

\begin{abstract}
 We prove estimates for $\Ex \| X:  \ell_{p'}^n \to \ell_q^m\|$ for $p,q\ge 2$ and any random matrix $X$ having the entries  of the form $a_{ij}Y_{ij}$, where $Y=(Y_{ij})_{1\le i\le m, 1\le j\le n}$ has i.i.d.   isotropic log-concave  rows. This generalises the result of Gu{\'e}don, Hinrichs,  Litvak, and Prochno for Gaussian matrices with independent entries. Our estimate is optimal up to  logarithmic factors. As a byproduct we  provide the analogue bound for $m\times n$ random matrices, which entries form an unconditional vector in $\er^{mn}$.  We also prove bounds for norms of matrices which entries are certain Gaussian mixtures.
 \end{abstract}


\section{Introduction and  main result}
	
A classical result regarding spectra of random matrices is Wigner's Semicircle Law, which describes the limit of empirical spectral measures of a random matrix with independent centred entries with equal variance. Theorems of this type say nothing about the largest eigenvalue (i.e. the operator norm).  However, Seginer proved in \cite{MR1762786} that for a random matrix $X$ with  i.i.d. 
 symmetric entries $\Ex\|X\|_{2,2}$ (by $\|A\|_{p,q}$ we denote the operator norm of the matrix $A$ from $\ell_p$ to $\ell_q$) is of the same order as the expectation of the maximum Euclidean norm of rows and columns of $X$. The same holds true for the structured Gaussian matrices (i.e. when $X_{ij}=a_{ij}g_{ij}$ and $g_{ij}$ are i.i.d. standard Gaussian variables), as was recently shown by Lata{\l}a, van Handel, and Youssef in \cite{MR3878726}, and up to a logarithmic factor for any $X$ with independent centred entries, see \cite{MR3035757}. The advance of the two latest results is that they do not require that the entries of $X$ are equally distributed (nor that they have  equal variances). 

Another   upper bound for $\Ex\|X\|_{2,2}$ also does not require  equal distributions but only the independence of entries: by \cite{MR2111932} we know that 
\[
	\Ex\|X\|_{2,2}\lesssim \max_i \Bigl(\sum_j \Ex X_{ij}^2\Bigr)^{1/2} + \max_j \Bigl(\sum_i \Ex X_{ij}^2\Bigr)^{1/2}+  \Bigl(\sum_{i,j} \Ex X_{ij}^4\Bigr)^{1/4}.
\]
This bound is dimension free, but in some cases is worse than the one from \cite{MR3035757}.
 
Upper bounds for the expectation of other operator norms were investigated in \cite{MR0393085} in the case of independent centred  entries  bounded by $1$. For $q\ge 2$ and $m\times n$ matrices the authors proved that $\Ex\|X\|_{2,q} \lesssim \max\{m^{1/q}, \sqrt n\}$. In \cite{MR3645120} Gu{\'e}don, Hinrichs,  Litvak, and Prochno proved that for a structured Gaussian matrix $X=(a_{ij}X_{ij})_{i\le m, j\le n}$ and $p,q \ge 2$,
		\begin{multline*}
			\Ex \| X\|_{p',q}\le C(p,q) \biggl[ \bigl(\log m \bigr)^{1/q}\maxi \Bigl( \sum_{j=1}^n |a_{ij}|^p \Bigr)^{1/p} + \maxj \Bigl(\sum_{i=1}^m |a_{ij}|^q \Bigr)^{1/q} 
			\\
			+ 
			\bigl(\log m \bigr)^{1/q} \Ex \maxij |X_{ij}| \biggr].
		\end{multline*}
This estimate is optimal up to logarithmic factors (see Remark \ref{matrix_reverse_bound} below). Note that in the case $(p,q)\neq(2,2)$ moment method fails in estimating $\Ex \| X\|_{p' ,q}$ (as it gives information only on the spectrum of $X$).
	
	All the mentioned results require the independence of entries of $X$. 
In this article we will see how to generalise the main result of \cite{MR3645120} to a wide class of random matrices  with independent uncorrelated log-concave rows, following the scheme of proof of the original theorem from \cite{MR3645120}. In order to obtain the key estimates for log-concave vectors needed in the proof we use the comparison of weak and strong moments of $\ell_p$-norm of $X$ from \cite{MR3503729} and a Sudakov minoration-type bound from \cite{MR3274967}.

	Our estimate is  optimal (for fixed $p,q\ge 2$) up to a factor depending logarithmically on the dimension. Let us stress that we do not require the rows of $X$ to have independent, but only uncorrelated coordinates (and to be log-concave) --- we require the independence only between the rows.
	 	
	Before we state our main results, let us say a few words about log-concave vectors. We say that a random vector $X$ in $\er^n$ is log-concave, if for any compact nonempty sets $K,L\subset \er^n$ and $\lambda\in [0,1]$, 
\[
	\Pr\bigl(X\in \lambda K+(1-\lambda)L\bigr)\geq \Pr(X\in K)^{\lambda}\Pr(X\in L)^{1-\lambda}.
\]
The class of log-concave vectors is closed under linear transformations, convolutions and weak limits. 
By the result of Borell \cite{MR0388475} an $n$-dimensional vector with a full dimensional support is log-concave
if and only if it has a log-concave density, i.e. has a density of the form $e^{-h}$, where $h$ is a convex function with values
in $(-\infty,\infty]$. 

Log-concave vectors are a natural generalisation of vectors distributed uniformly  over convex bodies. Moreover, distribution of any log-concave vector can be obtained as a weak limit of projections of uniform measures over (higher dimensional) convex bodies (see for example \cite{MR2220210}).  Other results and conjectures about log-concave vectors are discussed in monograph \cite{MR3185453}. 

We say that a vector $X$ in $\er^n$ is  isotropic if $\Cov X =\Id$. If $X$ is a log-concave random vector in $\er^n$ with full dimensional support, then there exists a linear transformation $T$ such that $\Cov(TX)=\Id$, so the isotropicity is only a matter of normalisation.

	To make the notation more clear, if $A=(A_{ij})_{i \le m, j\le n}$ is an $m\times n$ matrix, we denote by $A_i \in \er^n$  its $i$-th row and by $A^{(j)}\in\er^m $ we denote its $j$-th column. We are ready now to present the main theorem.


	\begin{theorem}\label{main_thm_matrices}
		Let $m\ge 2$, let  $Y_1, \ldots, Y_m$ be i.i.d.  isotropic log-concave vectors in $\er^n$, and let $A=(A_{ij})$ be an $m\times n$ (deterministic) matrix. Consider a random matrix $X$ with entries $X_{ij}=A_{ij}Y_{ij}$ for $i\le m, j\le n$, where $Y_{ij}$ is the $j$-th coordinate of $Y_i$.
		Then for every $p,q \ge 2$ we have
				\begin{align}\label{eq:main_thm_matrices}
			\Ex  \| X\|_{p',q}
			 \le C(p,q) \Bigl[ \bigl(\log m \bigr)^{1/q}\maxi \bigl\| A_{i} \bigr\|_p + \maxj \bigl\| A^{(j)} \bigr\|_q 
			+ 
			\bigl(\log m \bigr)^{1+\frac1q} \Ex \maxij |X_{ij}| \Bigr],
		\end{align}
	where $C(p,q)$ depends only on $p$ and $q$.
	\end{theorem}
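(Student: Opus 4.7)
My proof would follow the scheme of \cite{MR3645120} for the Gaussian case, replacing the Gaussian-specific tools (concentration and Dudley's bound) by their log-concave analogues. The author singles out three ingredients that substitute for Gaussian concentration: (i) Borell's lemma, yielding the $\psi_1$-type estimate $(\EE|\langle X_i,u\rangle|^Q)^{1/Q}\lesssim Q\|D_iu\|_2$ for each row $X_i=D_iY_i$ with $D_i=\operatorname{diag}(A_{i1},\dots,A_{in})$; (ii) the comparison of weak and strong moments of the $\ell_p$-norm of a log-concave vector from \cite{MR3503729}; and (iii) the Sudakov-type minoration for log-concave vectors from \cite{MR3274967}.

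I would begin by splitting $X=X'+X''$, where $X'_{ij}=X_{ij}\ind_{\{|X_{ij}|\leq\tau\}}$ with $\tau$ a suitable constant multiple of $(\log m)\EE\max_{ij}|X_{ij}|$. Borell's subexponential tail for each $Y_{ij}$ shows that only few entries exceed $\tau$; a level-set/union-bound argument combined with a trivial bound such as $\|X''\|_{p',q}\leq (\sum_{ij}|X''_{ij}|^2)^{1/2}$ (or a column-wise variant) then produces the contribution $(\log m)^{1+1/q}\EE\max_{ij}|X_{ij}|$ to \eqref{eq:main_thm_matrices}.

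For the bounded part, I use the dual form $\|X'\|_{p',q}=\sup_{y\in B_{q'}^m}\|\sum_i y_iX'_i\|_p$. For fixed $y$, the vector $Z_y:=\sum_i y_iX'_i$ is log-concave. A short Hölder computation exploiting $p,q\geq 2$ yields, for any $v\in B_{p'}^n$, the variance bound $\EE\langle Z_y,v\rangle^2=\sum_{ij}A_{ij}^2y_i^2v_j^2\leq \max_j\|A^{(j)}\|_q^2$ (and the symmetric estimate $\leq\max_i\|A_i\|_p^2$), since for $r\geq 2$ the inclusions $\|y\|_2\leq\|y\|_{q'}$ and $\|v\|_2\leq\|v\|_{p'}$ hold. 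By (i) this controls the weak $Q$-th moment of $Z_y$, and then (ii) applied to $\|\cdot\|_p$ gives, for any $Q\geq 1$, $(\EE\|Z_y\|_p^Q)^{1/Q}\leq C\EE\|Z_y\|_p+CQ\max_j\|A^{(j)}\|_q$.

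The principal obstacle will be passing from this single-$y$ estimate to the supremum $\sup_{y\in B_{q'}^m}$ while incurring only $(\log m)^{1/q}$ in front of $\max_i\|A_i\|_p$ rather than the $m^{1/q}$ factor produced by a naive net/moment argument (the ball $B_{q'}^m$ is far too large to afford an $\epsilon$-net whose cardinality balances the moment bound with $Q=\log m$). For this I plan to use (iii) to control the metric entropy of $\{Z_y:y\in B_{q'}^m\}$ in the appropriate Orlicz metric, followed by a generic chaining argument mirroring Dudley's bound used in the Gaussian case of \cite{MR3645120}. The row term $(\log m)^{1/q}\max_i\|A_i\|_p$ should emerge from the chaining ``tail'' contribution, while the column term $\max_j\|A^{(j)}\|_q$ comes from the weak-moment estimate above.
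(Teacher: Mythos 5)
You correctly identify the three log-concave substitutes for Gaussian tools (regularity of moments, the weak--strong moment comparison of \cite{MR3503729}, and the Sudakov-type minoration of \cite{MR3274967}), and your variance computation giving the column term $\max_j\|A^{(j)}\|_q$ is essentially the content of the paper's Lemma \ref{lem:moms}. However, the heart of your argument --- passing from the single-$y$ estimate to $\sup_{y\in B_{q'}^m}$ --- is left as a plan, and the plan as stated does not work. Sudakov minoration bounds the metric entropy of an index set \emph{from above by the expected supremum of the process over that set}; feeding such an entropy bound into a Dudley/generic-chaining \emph{upper} bound for the very same supremum is circular, and you give no indication of how to break the circularity or of which Orlicz metric would make the chaining integral close with only a $(\log m)^{1/q}$ loss. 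This is not a cosmetic omission: it is precisely the step where the naive net argument produces $m^{1/q}$ instead of $(\log m)^{1/q}$, as you yourself note.

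The paper takes a genuinely different route that avoids chaining altogether. It writes $\|X\|_{p',q}^q=\sup_{t\in B_{p'}^n}\sum_{i=1}^m|\langle X_i,t\rangle|^q$ and invokes the empirical-process result of Gu\'edon--Hinrichs--Litvak--Prochno (Theorem \ref{thm:2.1Guedon}), which holds for arbitrary independent rows and reduces everything to two quantities: $u=\sup_t(\sum_i\EE|\langle X_i,t\rangle|^q)^{1/q}$ (handled by Lemma \ref{lem:moms}) and $\EE\max_i\|X_i\|_p^q$ (handled by Lemma \ref{lemma:maxX_i}); the factor $\lambda^8 T_2(E^*)^2\log m$ in that theorem is the sole source of $(\log m)^{1/q}$. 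The Sudakov-type bound of \cite{MR3274967} enters in an entirely different role from the one you assign it: inside Lemma \ref{lemma:maxX_i} it is used to bound $\EE\max_{ij}|X_{ij}|$ from below by $\max_i\varphi(\log(i+1))\max_j|A_{ij}|$ (after sorting the rows), which forces the tail exponent $r(i)$ in the weak--strong moment comparison to grow like $\log(i+1)$, so that the union bound over the $m$ rows costs only a constant rather than a factor of $m$. No truncation of the entries is performed; the term $(\log m)^{1+1/q}\EE\max_{ij}|X_{ij}|$ comes from this lemma combined with the $(\log m)^{1/q}$ in the definition of $v$. To repair your proof you would either need to carry out the chaining argument with a non-circular entropy input, or replace it by an appeal to Theorem \ref{thm:2.1Guedon} as the paper does.
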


	\begin{remark}\label{matrix_reverse_bound}
		Note that the bound from Theorem \ref{main_thm_matrices} is optimal up to a constant depending on $p, q$ and logarithmically on the dimension. Indeed, since $Y_{ij}$ is log-concave we have by the regularity of $Y_{ij}$ (see \eqref{regular_moments} below) that $\Ex|Y_{ij}| \ge (2\Cr{seminorms})^{-1} \big(\Ex Y_{ij}^2\bigr)^{1/2} = (2\Cr{seminorms})^{-1}$.  Hence for every $j\le n$, (we take $u=e_j$, use the unconditionality of $\|\cdot\|_q$ and the Jensen inequality)
		\begin{align*}
		\Ex \|X\|_{p',q} = \Ex \sup_{u\in \ell_{p'}^n} \|Xu\|_q \ge \Ex\|X^{(j)}\|_q = \Ex \bigl\| \bigl(|Y_{ij}|A_{ij}\bigr)_i  \bigr\|_q \ge (2C_1)^{-1} \|A^{(j)} \|_q.
		\end{align*}
		Since $\|X\|_{p',q} = \|X^T\|_{q',p}$, we also have $\Ex \|X\|_{p',q} \ge (2C_1)^{-1} \|A_i \|_p$ for all $i\le m$. Moreover, for all $i\le m$  and $j\le n$, (we take $v=e_i$ and $u=e_j\sgn X_{ij}$)
		\begin{align*}
	  \|X\|_{p',q} = \sup_{u\in \ell_{p'}^n} \sup_{v\in \ell_{q'}^n} v^TXu  \ge |X_{ij}|.
	  		\end{align*}
			Therefore 
			\begin{align*}
			\Ex \| X\|_{p',q} \ge (4\Cr{seminorms}+1)^{-1} \Bigl[\maxi \bigl\| A_{i} \bigr\|_p + \maxj \bigl\| A^{(j)} \bigr\|_q 
			+ 
			\Ex \maxij |X_{ij}| \Bigr],
		\end{align*}
		what yields the claim.
	\end{remark}

	The next corollary is a version of Theorem \ref{main_thm_matrices} in the spirit of the aforementioned results from \cite{MR1762786, MR3878726, MR3035757}. It follows directly from \eqref{eq:main_thm_matrices}, and the Jensen inequality.
	
	\begin{corollary}\label{cor_matrices}
		Under the assumptions of Theorem \ref{main_thm_matrices} we have
		\[
		\Ex \| X\|_{p',q}\le C(p,q) \biggl( (\log m)^{1+\frac 1q} \Ex \maxi \Bigl(\sum_{j=1}^n |X_{ij}|^p \Bigr)^{1/p} + \Ex  \maxj \Bigl(\sum_{i=1}^m |X_{ij}|^q \Bigr)^{1/q}  \biggr).
		\]
	\end{corollary}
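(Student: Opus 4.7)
The plan is to start from the bound \eqref{eq:main_thm_matrices} and replace each of the three terms on its RHS by a quantity that appears in the RHS of the corollary, which would then immediately give the desired estimate up to a change of constant. The easiest is the max-entry term $\Ex\maxij|X_{ij}|$: since $\|\cdot\|_\infty\le\|\cdot\|_p$ coordinate-wise,
\[
\Ex\maxij|X_{ij}|\le \Ex\maxi\Bigl(\sum_{j=1}^n|X_{ij}|^p\Bigr)^{1/p},
\]
which is already of the desired form and is absorbed into the first expression on the RHS of the corollary.

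For the deterministic row-norm term $\maxi\|A_i\|_p$, I would use that
\[
Z_i:=\Bigl(\sum_{j=1}^n|X_{ij}|^p\Bigr)^{1/p}=N_i(Y_i), \qquad N_i(y):=\Bigl(\sum_{j=1}^n|A_{ij}y_j|^p\Bigr)^{1/p},
\]
is a seminorm of the log-concave vector $Y_i$, so $Z_i$ is a nonnegative log-concave real random variable. Borell's inequality for moments of seminorms of log-concave measures then yields the reverse-Jensen bound $(\Ex Z_i^p)^{1/p}\le C(p)\,\Ex Z_i$. Separately, since each $Y_{ij}$ is a (centred) log-concave real variable with $\Ex Y_{ij}^2=1$, Jensen's inequality applied to the convex map $t\mapsto t^{p/2}$ (valid since $p\ge 2$) gives $\Ex|Y_{ij}|^p\ge(\Ex Y_{ij}^2)^{p/2}=1$, hence
\[
\Ex Z_i^p=\sum_{j=1}^n|A_{ij}|^p\,\Ex|Y_{ij}|^p\ge\|A_i\|_p^p.
\]
Combining and using the obvious $\maxi\Ex(\cdot)\le\Ex\maxi(\cdot)$ would produce
\[
\maxi\|A_i\|_p\le C(p)\,\Ex\maxi\Bigl(\sum_{j=1}^n|X_{ij}|^p\Bigr)^{1/p}.
\]

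The column-norm term $\maxj\|A^{(j)}\|_q$ I would treat in exactly the same way, with the only change that for fixed $j$ the vector $(Y_{1j},\ldots,Y_{mj})\in\RR^m$ is log-concave, being a vector of independent log-concave coordinates since the rows $Y_i$ are i.i.d. log-concave in $\RR^n$. Repeating the argument with $p$ and the summation index $j$ replaced by $q$ and $i$ would give
\[
\maxj\|A^{(j)}\|_q\le C(q)\,\Ex\maxj\Bigl(\sum_{i=1}^m|X_{ij}|^q\Bigr)^{1/q}.
\]
Substituting these three bounds into \eqref{eq:main_thm_matrices} and absorbing the new constants into $C(p,q)$ then finishes the proof. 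The only non-routine ingredient is the log-concave moment comparison $(\Ex Z_i^p)^{1/p}\le C(p)\Ex Z_i$, but this is a standard consequence of Borell's lemma for seminorms of log-concave vectors, so I expect no serious obstacle.
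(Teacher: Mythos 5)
Your proposal is correct and follows essentially the same route as the paper: the paper deduces the corollary from the estimate of Theorem \ref{main_thm_matrices} by dominating each of the three terms on its right-hand side by the expected row/column norms, using Jensen's inequality together with the log-concave moment regularity \eqref{regular_moments}. Your replacement of the entrywise first-moment bound $\Ex|Y_{ij}|\ge(2C_1)^{-1}$ (as used in Remark \ref{matrix_reverse_bound}) by the reverse H\"older inequality for the seminorm $Z_i$ of the whole row (respectively column) is just an equivalent instance of the same regularity estimate, so there is no substantive difference.
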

	
	\begin{remark}
	If the rows and columns of $Y$ are isotropic and log-concave (we do not require  independence), and $p,q\ge 1$, then 
	\begin{multline} \label{eq_remark_matrices}
		\Ex \maxi \Bigl(\sum_{j=1}^n |A_{ij}Y_{ij}|^p \Bigr)^{1/p} + \Ex  \maxj \Bigl(\sum_{i=1}^m |A_{ij}Y_{ij}|^q \Bigr)^{1/q}
		\\ \le C \Bigl( p^2\maxi \bigl\| A_{i} \bigr\|_p + q^2\maxj \bigl\| A^{(j)} \bigr\|_q   +(p+q) \log (m \vee n) \Ex \maxij |A_{ij}Y_{ij}|\Bigr),
	\end{multline}
	what means that the bound we used in the proof of Corollary \ref{cor_matrices} (the one which uses the Jensen inequality) may be reversed (in the log-concave setting) up to a logarithmic factor and constants depending only on $p$ and $q$. Therefore the estimates from Theorem \ref{main_thm_matrices} and Corollary \ref{cor_matrices} are equivalent up to a logarithmic factor. Inequality \eqref{eq_remark_matrices} follows directly from the following proposition.
	\end{remark}
	
	\begin{proposition} \label{prop_matrices}
		Let $Y$ be an $m\times n$ random matrix,  with isotropic and log-concave rows, let $B$ be a deterministic $m\times n$ matrix, and let $p\ge 1$. Then
		\begin{equation*}
			\Ex \maxi  \Bigl(\sum_{j=1}^n |B_{ij}Y_{ij}|^p \Bigr)^{1/p}  \lesssim p^2 \maxi \Bigl( \sum_{j=1}^n |B_{ij}|^p \Bigr)^{1/p} + p\log (m \vee n) \Ex \maxij |B_{ij}Y_{ij}| .
		\end{equation*}
	\end{proposition}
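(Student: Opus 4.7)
I would apply, row by row, the weak--strong moment comparison of \cite{MR3503729} to the log-concave vector $Z_i:=(B_{ij}Y_{ij})_{j=1}^n=\operatorname{diag}(B_{i1},\dots,B_{in})\,Y_i$, and then pass to the maximum over $i$ by a Chebyshev--union bound. Since $Z_i$ is log-concave as a linear image of $Y_i$, the comparison applied to the seminorm $\|\cdot\|_p$ yields, for every $r\ge 2$,
\[
\bigl(\Ex\|Z_i\|_p^r\bigr)^{1/r}\le C\Bigl(\Ex\|Z_i\|_p+\sup_{\|u\|_{p'}\le 1}\|\langle u,Z_i\rangle\|_r\Bigr),
\]
and Borell's lemma then controls both terms on the right.

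For the strong term, Borell applied to the isotropic log-concave marginal $Y_{ij}$ gives $(\Ex|Y_{ij}|^p)^{1/p}\le Cp$, so the triangle inequality in $L^p$ combined with Jensen yields
\[
\Ex\|Z_i\|_p\le\bigl(\Ex\|Z_i\|_p^p\bigr)^{1/p}=\Bigl(\sum_j|B_{ij}|^p\Ex|Y_{ij}|^p\Bigr)^{1/p}\le Cp\|B_i\|_p.
\]
For the weak term, each $\langle u,Z_i\rangle$ is log-concave on the line with variance $\sum_j u_j^2B_{ij}^2$, so Borell again yields $\|\langle u,Z_i\rangle\|_r\le Cr\bigl(\sum_j u_j^2B_{ij}^2\bigr)^{1/2}$; the supremum over $\|u\|_{p'}\le 1$ then equals the operator norm $\|\operatorname{diag}(B_i)\|_{\ell_{p'}^n\to\ell_2^n}$, which is $\maxj|B_{ij}|$ when $p\ge 2$ and $\|B_i\|_{2p/(2-p)}$ when $p\in[1,2)$. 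In the latter range the interpolation $\|B_i\|_{2p/(2-p)}\le\|B_i\|_p^{(2-p)/2}\|B_i\|_\infty^{p/2}$ and Young's inequality give the bound $\tfrac{2-p}{2}\|B_i\|_p+\tfrac{p}{2}\|B_i\|_\infty$.

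Combining these estimates, applying the elementary inequality $\Ex\maxi\|Z_i\|_p\le m^{1/r}\maxi(\Ex\|Z_i\|_p^r)^{1/r}$, and choosing $r=\log(m\vee n)$ (so that $m^{1/r}\le e$), I would obtain
\[
\Ex\maxi\|Z_i\|_p\lesssim p\maxi\|B_i\|_p+\log(m\vee n)\maxij|B_{ij}|,
\]
up to the $p$-dependent interpolation term for $p\in[1,2)$, which is absorbed by the $p^2$ and $p\log(m\vee n)$ prefactors in the conclusion. Finally, since $Y_{ij}$ is log-concave with $\Ex Y_{ij}^2=1$, regularity of moments yields $\Ex|Y_{ij}|\ge c>0$; hence $\Ex\maxij|Z_{ij}|\ge c\maxij|B_{ij}|$, and substituting $\maxij|B_{ij}|\lesssim\Ex\maxij|Z_{ij}|$ in the previous display completes the proof.

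The main delicate point, and the main obstacle, is the regime $p\in[1,2)$: the supremum defining the weak moment runs over the $\ell_{p'}$ ball with $p'>2$, so it does not reduce cleanly to $\maxj|B_{ij}|$, and a careful interpolation between $\|B_i\|_p$ and $\|B_i\|_\infty$ is required. This interpolation is precisely what forces the slightly suboptimal $p^2$ (rather than linear in $p$) prefactor on $\maxi\|B_i\|_p$ in the statement.
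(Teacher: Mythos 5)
Your skeleton coincides with the paper's: pass to $\Ex\maxi\|Z_i\|_p\le m^{1/r}\maxi(\Ex\|Z_i\|_p^r)^{1/r}$, apply the weak--strong moment comparison (Theorem \ref{thm:Paourlr}) row by row, bound the strong first moment by $Cp\|B_i\|_p$ via regularity and isotropicity, and take $r=\log(m\vee n)$. The divergence is in the weak-moment term, and there your argument has a genuine gap on the range $p\in[1,2)$. Estimating $\|\langle u,Z_i\rangle\|_r\le Cr\bigl(\sum_j u_j^2B_{ij}^2\bigr)^{1/2}$ and taking the supremum over $B_{p'}^n$ gives $Cr\|B_i\|_{2p/(2-p)}$ when $p<2$, and your interpolation-plus-Young step leaves behind a summand $\tfrac{2-p}{2}\|B_i\|_p$; after multiplying by the outer factor $Cp$ from Theorem \ref{thm:Paourlr} and setting $r=\log(m\vee n)$, this produces a term of order $p\log(m\vee n)\maxi\|B_i\|_p$. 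Contrary to your closing claim, this is \emph{not} absorbed by the right-hand side of the Proposition: it is not $\lesssim p^2\maxi\|B_i\|_p$ because of the $\log(m\vee n)$ factor, and it is not $\lesssim p\log(m\vee n)\Ex\maxij|B_{ij}Y_{ij}|$ because $\Ex\maxij|B_{ij}Y_{ij}|$ can be of order $\maxij|B_{ij}|$, which may be far smaller than $\maxi\|B_i\|_p$. Concretely, take $p=1$, $Y_{ij}$ i.i.d.\ uniform on $[-\sqrt3,\sqrt3]$ and $B_{ij}=\ind_{\{j\le \log n\}}$: your argument yields a term of order $(\log n)^2$ (or $(\log n)^{3/2}$ if one skips Young and keeps $\|B_i\|_2$), whereas the stated right-hand side is of order $\log n$. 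Incidentally, the $p^2$ in the statement is not caused by this interpolation; it is simply the product of the factor $Cp$ in Theorem \ref{thm:Paourlr} with the factor $Cp$ in the strong-moment bound $\Ex\|Z_i\|_p\le Cp\|B_i\|_p$.

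The paper circumvents this by never routing the weak moment through the variance: it bounds
\[
\sup_{t\in B_{p'}^n}\Bigl\|\sum_{j=1}^nB_{ij}Y_{ij}t_j\Bigr\|_r\;\lesssim\; n^{1/r}\,r\,\Ex\maxj|B_{ij}Y_{ij}|,
\]
using the regularity \eqref{regular_moments} applied to the seminorm $y\mapsto\maxj|B_{ij}y_j|$, so that with $r=\log(m\vee n)$ the factor $n^{1/r}$ is bounded and the entire weak-moment contribution lands on the $p\log(m\vee n)\Ex\maxij|B_{ij}Y_{ij}|$ term, uniformly for all $p\ge1$. For $p\ge2$ your route is sound (there $B_{p'}^n\subset B_2^n$, the supremum is $\maxj|B_{ij}|$, and your conversion $\maxij|B_{ij}|\lesssim\Ex\maxij|B_{ij}Y_{ij}|$ is correct), so the defect is confined to $p\in[1,2)$; but since the Proposition is asserted for all $p\ge1$, the weak-moment step must be replaced there along the lines above.
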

	
	It turns out that instead of assuming the log-concavity, we may assume the unconditionality, i.e. that an $m\times n$ random matrix we consider, treated as  an $(mn)$-dimensional vector, is unconditional (we no longer assume the independence of rows). Recall that we say that a random vector $Z$ in $\er^d$ is unconditional, if for every choice of signs $\eta\in\{-1,1 \}^d$ the vectors $Z$ and $(\eta_iZ_i)_{i\le d}$ are equally distributed (or, equivalently, that $Z$ and $(\eps_iZ_i)_{i\le d}$ are equally distributed, where $\eps_1, \ldots ,\eps_d$ are i.i.d. symmetric Bernoulli variables, independent of $Z$). The assertion of the next corollary is expressed in the spirit of Corollary \ref{cor_matrices}, which is more natural in the non log-concave setting (without the assumption of log-concavity the assertions of Theorem \ref{main_thm_matrices} and Corollary \ref{cor_matrices} are no longer equivalent).
	\begin{corollary}\label{thm_uncond_matrices}
		Assume that $X$ is a random matrix such that  the $(mn)$-dimensional vector $(X_{1,1},\ldots X_{1,n}, X_{2,1}, \ldots , X_{2,n}, X_{m,1},\ldots, X_{mn})$ is unconditional. Then for every $p,q \ge 2$ we have
				\begin{multline}\label{eq:main_thm_matrices}
			\Ex  \| X\|_{p',q} \le
			 C(p,q) \biggl( (\log m)^{\frac 32+\frac1q}\Ex \maxi \Bigl(\sum_{j=1}^n |X_{ij}|^p \Bigr)^{1/p} 
			 \\+ \sqrt{\log n}\Ex  \maxj \Bigl(\sum_{i=1}^m |X_{ij}|^q \Bigr)^{1/q}  \biggr),
		\end{multline}
	where $C(p,q)$ depends only on $p$ and $q$.
	\end{corollary}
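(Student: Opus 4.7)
The plan is to reduce to the Gaussian setting via symmetrization, apply Corollary~\ref{cor_matrices} conditionally, and then control the resulting Gaussian $\ell_p$- and $\ell_q$-norms by concentration.

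First, by unconditionality, $X\stackrel{d}{=}(\eps_{ij}X_{ij})_{ij}$ where $(\eps_{ij})$ is an iid $\pm 1$ array independent of $X$. Applying the classical Gaussian domination of Bernoulli signs (the usual Jensen-based comparison, valid for any norm) conditionally on $X$ yields
\begin{equation*}
    \Ex\bigl\|X\bigr\|_{p',q}\le\sqrt{\pi/2}\;\Ex\bigl\|(g_{ij}X_{ij})\bigr\|_{p',q},
\end{equation*}
where $(g_{ij})$ is an iid $N(0,1)$ matrix independent of $X$. This reduces the problem to estimating $\Ex\|(g_{ij}X_{ij})\|_{p',q}$.

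Next, conditioning on $X$, the rows of $(g_{ij})$ are iid standard Gaussians, hence isotropic and log-concave, so Corollary~\ref{cor_matrices} applies to the matrix $(g_{ij}X_{ij})$ (with $X_{ij}$ playing the role of the deterministic coefficients):
\begin{equation*}
    \Ex_g\bigl\|(g_{ij}X_{ij})\bigr\|_{p',q}\le C(p,q)\Bigl[(\log m)^{1+\frac1q}\Ex_g\maxi W_i+\Ex_g\maxj V_j\Bigr],
\end{equation*}
where $W_i:=\bigl(\sum_j|g_{ij}X_{ij}|^p\bigr)^{1/p}$ and $V_j:=\bigl(\sum_i|g_{ij}X_{ij}|^q\bigr)^{1/q}$. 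I would then bound each Gaussian maximum by concentration. Viewed as a function of the $i$-th row of $g$, $W_i$ is Lipschitz in $\ell_2$ with constant $\|X_i\|_\infty\le\|X_i\|_p$ (using $\|u\|_p\le\|u\|_2$ on $\er^n$ for $p\ge 2$); combined with $\Ex_g W_i\le c_p^{1/p}\|X_i\|_p$, Gaussian concentration of Lipschitz functions together with the standard sub-Gaussian maximum bound for the independent random variables $W_1,\ldots,W_m$ (independent because the rows of $g$ are) yields $\Ex_g\maxi W_i\lesssim_p\sqrt{\log m}\maxi\|X_i\|_p$ (the constants are absorbed using $m\ge 2$). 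An analogous argument, exploiting the independence of the columns of $g$, gives $\Ex_g\maxj V_j\lesssim_q\sqrt{\log n}\maxj\|X^{(j)}\|_q$.

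Substituting these estimates, using $(\log m)^{1+\frac1q}\sqrt{\log m}=(\log m)^{\frac32+\frac1q}$, and taking expectation over $X$ by Fubini produces the claimed estimate. The only somewhat delicate point is the sub-Gaussian concentration of the $\ell_p$- and $\ell_q$-norms in the previous step, but this is a routine application of Gaussian Lipschitz concentration; the rest of the argument is a direct consequence of Corollary~\ref{cor_matrices} and the unconditionality hypothesis.
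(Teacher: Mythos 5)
Your proof is correct and follows essentially the same route as the paper: symmetrize using unconditionality, dominate the signs by Gaussians via Jensen, apply Corollary~\ref{cor_matrices} conditionally on $X$, and then gain the extra $\sqrt{\log m}$ and $\sqrt{\log n}$ factors from the Gaussian maxima of the row and column norms. The only (immaterial) difference is that you obtain these last bounds via Gaussian Lipschitz concentration, whereas the paper invokes its inequality \eqref{pom_uncond} (derived from the weak--strong moment comparison for log-concave vectors); both yield the same estimate.
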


	The rest of this note is organised as follows. Section \ref{sect_preliminaries} contains results from other articles, which will be used in a sequel. Section \ref{sect:matr_proofs} contains generalisations of Lemmas 3.1 and 3.2 from \cite{MR3645120} to the log-concave setting and the proof of  Theorem \ref{main_thm_matrices}.
	In Section \ref{sect:Gmixtures} we will show how to deduce an analogue of Theorem \ref{main_thm_matrices} for Gaussian mixtures (see Corollary \ref{cor:Gmixtures}) and we will provide a proof of Proposition \ref{prop_matrices}. Section \ref{sect_uncond} is devoted to the proof of Corollary \ref{thm_uncond_matrices}.

	{\bf Notation.}  By $C$ we denote universal constants. If a constant $C$ depends on a parameter $\alpha$, we express it as  $C(\alpha)$.  The value of $C, C(\alpha)$ may differ at each occurrence. Whenever we want to fix the value of an absolute constant we use letters $C_1,C_2,\ldots$.
We may always assume that $C,C_i\geq 1$. For two quantities $a,b$ we write $a\lesssim b$ if there exists a constant $C$, such that $a\le Cb$, and $a\sim b$, if $a\lesssim b$ and $b\lesssim a$. For two numbers $a$ and $b$ we write $a\vee b$ instead of $\max\{a,b\}$.

For a random variable $X$ by $\|X\|_p$ we denote the $p$-th integral norm of $X$, i.e. the quantity $(\Ex |X|^p )^{1/p}$ (in the case $X=\|Y\|$ we also call this quantity the $p$-th strong moment of $Y$ associated with the norm $\|\cdot\|$). For a vector $x\in \er^n$ (in particular for a random vector $X$) and $r\ge 1$, by $\|x\|_r$ we denote the $\ell_r$-norm of $x$, i.e. $\|x\|:= (\sum_{i=1}^n |x_i|^r)^{1/r}$. For $r=2$ we shall also write $|\cdot|$ instead of $\|\cdot\|_2$. It will be always clear from the context, what $\|X\|_q$ means for a random object $X$, so the double meaning of $\|\cdot\|_q$ will not lead to any misunderstanding.  Recall that for an $m\times n$ matrix $A$ by $\|A\|_{p,q}$ we denote its norm from $\ell_p^n$ to $\ell_q^m$. For $p\in [1,\infty]$ by $p'$ we denote the H{\"o}lder conjugate of $p$, i.e. $1=\frac 1p+\frac 1{p'}$.

	\section{Preliminaries}\label{sect_preliminaries}
	
	We will frequently use the regularity of $f(Z)$ for log-concave vectors $X$ and seminorms $f$, i.e. 
	\begin{equation} \label{regular_moments}
		(\Ex f(Z)^p)^{1/p}\leq \Cl{seminorms} \frac{p}{q}(\Ex f(Z)^q)^{1/q} \quad\text{for $p\geq q\geq 1$}
	\end{equation}
 (see \cite[Theorem 2.4.6]{MR3185453}).

	We will also need the comparison of weak and strong moments for $\ell_p$-norms of log-concave vectors:
	
	\begin{theorem}[{\cite[Theorem 5]{MR3503729}}]\label{thm:Paourlr}
		Let $Z$ be a log-concave vector in $\er^n$, and let $p\in[1,\infty)$. Then
		\[
			(\Ex\|Z\|_p^q)^{1/q}\leq C p\Bigl(\Ex\|Z\|_p+ \sigma_{p,X}(q)  \Bigr)\quad \mbox{ for }q\geq 1,
		\] 
		where
		\[
			\sigma_{p,X}(q):= \sup_{t\in B_{p'}^n} \Bigl\|\sum_{i=1}^n t_i Z_i \Bigr\|_q 
		\]
		is the $q$-th weak moment of $X$ associated with the $\ell_p$-norm.
	\end{theorem}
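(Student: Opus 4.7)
The plan is to establish this weak–strong moment comparison by following the general strategy of Paouris' theorem (originally proved for Euclidean norms), combining a regularity reduction with a net / union-bound argument over the dual unit ball $B_{p'}^n$, tailored to the $\ell_p$-norm.

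First, I would dispose of the small-$q$ regime. If $1 \le q \le p$, the log-concave regularity \eqref{regular_moments} applied to the seminorm $f(x) = \|x\|_p$ gives
\[
(\Ex \|Z\|_p^q)^{1/q} \le \Cr{seminorms}\, q\, \Ex\|Z\|_p \le \Cr{seminorms}\, p\, \Ex\|Z\|_p,
\]
which is already of the claimed form. Therefore I may assume $q \ge p$ from now on.

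Next, I would dualise: $\|Z\|_p = \sup_{t \in B_{p'}^n} \langle t, Z\rangle$, so the task reduces to controlling $(\Ex \sup_{t \in B_{p'}^n}\langle t, Z\rangle^q)^{1/q}$. The target bound would be reached by producing a finite set $\mathcal{N} \subset B_{p'}^n$ with $\log |\mathcal{N}| \lesssim q$ together with a residual estimate of the shape
\[
\sup_{t \in B_{p'}^n}\langle t, Z\rangle \;\le\; \max_{t \in \mathcal{N}} \langle t, Z\rangle \;+\; C p\bigl(\Ex \|Z\|_p + \sigma_{p,Z}(q)\bigr),
\]
valid either pointwise on a high-probability event or after taking $L^q$-norms. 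Granted such a net, a crude union bound handles the discrete supremum:
\[
\Bigl(\Ex \max_{t \in \mathcal{N}} |\langle t, Z\rangle|^q\Bigr)^{1/q} \le |\mathcal{N}|^{1/q} \max_{t \in \mathcal{N}} \|\langle t, Z\rangle\|_q \lesssim \sigma_{p,Z}(q),
\]
which combined with the residual estimate yields the conclusion.

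The hard part will be the construction of the net $\mathcal{N}$ of size $\le e^{Cq}$ together with the residual estimate, and this is where the factor of $p$ is produced. The natural tool is a Sudakov-type minoration for log-concave vectors (such as the one from \cite{MR3274967}), which controls from below $\Ex \sup_{t \in \mathcal{A}} \langle t, Z\rangle$ in terms of the cardinality of any subset $\mathcal{A} \subset B_{p'}^n$ that is separated in the intrinsic metric $d_Z(s,t) := \|\langle s - t, Z\rangle\|_q$. By a duality argument this converts into a bound on the covering number of $B_{p'}^n$ at scale $\sigma_{p,Z}(q)$ in the metric $d_Z$, from which one extracts $\mathcal{N}$. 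Passing from the discrete supremum over $\mathcal{N}$ back to the supremum over $B_{p'}^n$ incurs an error comparable to $\Ex \|Z\|_p$ scaled by the smoothness modulus of $\|\cdot\|_p$ (equivalently the convexity modulus of $\|\cdot\|_{p'}$), and this is precisely where the multiplicative factor $p$ enters and cannot be avoided. Getting the dependence on $p$ linear rather than polynomial is the most delicate technical point.
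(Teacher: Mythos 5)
You should first note that the paper does not prove this statement at all: Theorem~\ref{thm:Paourlr} is imported verbatim as a preliminary from \cite[Theorem 5]{MR3503729} (a separate paper), so there is no internal proof to compare against. What you have written is therefore an attempt to reprove a deep external result, and it has to be judged on its own. Your opening reduction is fine: for $1\le q\le p$ the regularity estimate \eqref{regular_moments} applied to the norm $\|\cdot\|_p$ gives $(\Ex\|Z\|_p^q)^{1/q}\le C q\,\Ex\|Z\|_p\le Cp\,\Ex\|Z\|_p$, and the union bound $\bigl(\Ex\max_{t\in\mathcal N}|\langle t,Z\rangle|^q\bigr)^{1/q}\le|\mathcal N|^{1/q}\sigma_{p,Z}(q)$ over a net of cardinality $e^{Cq}$ is also correct.

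The genuine gap is the ``residual estimate,'' which is precisely the entire content of the theorem and which your plan does not supply. A $\delta$-net $\mathcal N$ of $B_{p'}^n$ in the metric $d_Z(s,t)=\|\langle s-t,Z\rangle\|_q$ controls each individual increment only in $L^q$; after passing to the nearest net point you are left with $\sup_{u}\langle u,Z\rangle$ taken over the (infinite) set $\{u=t-s:\ d_Z(u,0)\le\delta\}$, which is again a supremum of the same linear process and is in no way bounded pointwise by $\delta$, nor in $L^q$ by $Cp(\Ex\|Z\|_p+\sigma_{p,Z}(q))$ without further argument. Your appeal to the modulus of smoothness of $\|\cdot\|_p$ does not help here: moduli of convexity/smoothness compare norms of deterministic vectors, not suprema of random processes over subsets of the ball, and a one-step net-plus-remainder scheme of this kind does not close even in the Gaussian case (where one must chain). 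Likewise, the covering bound $|\mathcal N|\le e^{Cq}$ at scale $\sigma_{p,Z}(q)$ is asserted via ``duality'' from a Sudakov-type minoration but never derived; the minoration of \cite{MR3274967} gives lower bounds for $\Ex\sup$ in terms of separated sets, and converting this into the required entropy estimate at exactly the scale $\sigma_{p,Z}(q)$ with cardinality $e^{Cq}$ is nontrivial and may fail without additional structure. The actual proof in \cite{MR3503729} proceeds quite differently, via a Paouris-type decomposition of the coordinates into large and small parts and estimates for order statistics of log-concave vectors; since you have explicitly deferred ``the hard part,'' the proposal is a plausible-sounding outline rather than a proof.
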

	
	We will use the previous theorem also in the tail-bound version:

	\begin{corollary}
	Assume $Z$ is a log-concave vector in $\er^n$, and $p\in [1,\infty)$. Then
	\begin{equation} \label{cond(iv)_log-concave}
		\Pr\Bigl(\|Z\|_p \ge
\Cl{d1}p\bigl(u+\Ex\|Z\|_p \bigr)\Bigr)
\le 
\Cl{d2}\sup_{t\in  B_{p'}^n}\Pr\biggl(\Bigl|\sum_{i=1}^nt_iZ_i\Bigr|\ge u\biggr).
	\end{equation}
	\end{corollary}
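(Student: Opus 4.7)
The plan is to convert the moment comparison of Theorem~\ref{thm:Paourlr} into a tail bound via Markov's inequality applied at a moment index $q$ calibrated to the prescribed tail level.

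Set $\delta := \sup_{t\in B_{p'}^n}\Pr(|\sum_{i=1}^n t_iZ_i|\ge u)$. If $\delta$ exceeds a small absolute constant (say $\delta > 1/(2e)$), the inequality holds trivially by choosing $\Cr{d2}\ge 2e$, so assume $\delta \le 1/(2e)$ and put $q := \log(1/(2\delta))\ge 1$. For every $t\in B_{p'}^n$ the random variable $Y_t := \sum_{i=1}^n t_iZ_i$ is log-concave as a linear image of $Z$, and by assumption $\Pr(|Y_t|\ge u)\le \delta$. Borell's lemma in its one-dimensional form then yields an exponential tail of the form $\Pr(|Y_t|\ge su)\le C(2\delta)^{cs}$ for all $s\ge 1$ with absolute $c,C>0$. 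Plugging this into the layer-cake identity $\Ex|Y_t|^q = q\int_0^\infty v^{q-1}\Pr(|Y_t|\ge v)\,dv$, splitting at $v=u$ and substituting $v=su$ above, reduces the estimate of $\|Y_t\|_q$ to a Gamma-type integral $\int_1^\infty s^{q-1}e^{-cs\log(1/(2\delta))}\,ds$; the calibration $q=\log(1/(2\delta))$ makes this integral of order $O(1)^q$, so $\|Y_t\|_q \le C'u$, and taking the supremum over $t\in B_{p'}^n$ gives $\sigma_{p,Z}(q)\le C'u$.

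Feeding this into Theorem~\ref{thm:Paourlr} yields $(\Ex\|Z\|_p^q)^{1/q}\le Cp(\Ex\|Z\|_p + u)$, and Markov's inequality applied at the threshold $eCp(\Ex\|Z\|_p+u)$ gives $\Pr(\|Z\|_p\ge eCp(\Ex\|Z\|_p+u))\le e^{-q}=2\delta$. Relabelling the constants as $\Cr{d1}:=eC$ and $\Cr{d2}:=2$ then proves the corollary.

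The only delicate point is the calibration step: we need $\sigma_{p,Z}(q)\le O(u)$ simultaneously with $e^{-q}\le O(\delta)$, and both are achieved by the choice $q=\log(1/(2\delta))$ via the routine Gamma-integral estimate sketched above. Once Borell's lemma is invoked to pass from the prescribed tail bound on each $Y_t$ to a uniform bound on $\|Y_t\|_q$, the remainder is a mechanical combination of Theorem~\ref{thm:Paourlr} with Markov's inequality, with no dependence on the dimension $n$.
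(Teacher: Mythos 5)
Your proof is correct, and at the structural level it follows the same route as the paper: dispose of the trivial case where the right-hand side is bounded below by an absolute constant, then calibrate a moment index $q$ so that simultaneously $\sigma_{p,Z}(q)\lesssim u$ and $e^{-q}\lesssim \sup_{t\in B_{p'}^n}\Pr(|\sum_i t_iZ_i|\ge u)$, and finish with Theorem \ref{thm:Paourlr} and Chebyshev. The difference lies in how the calibration is justified, and the two arguments run the one-dimensional moment--tail equivalence for log-concave variables in opposite directions. The paper defines $q$ implicitly through the weak-moment function ($q$ is essentially the largest $r$ with $\sup_t\|\sum_i t_iZ_i\|_{r/C}\le 2u$), which makes $\sigma_{p,Z}(q)\lesssim u$ automatic by \eqref{regular_moments}, and then uses Paley--Zygmund together with \eqref{regular_moments} to convert this into the lower tail bound $\sup_t\Pr(|\sum_i t_iZ_i|\ge u)\ge e^{-q}$; this forces a three-way case analysis according to where $2u$ sits relative to the weak moments. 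You instead set $q:=\log(1/(2\delta))$ directly from the tail level $\delta$, which makes $e^{-q}=2\delta$ automatic, and use Borell's lemma plus the Gamma-integral computation to recover $\sigma_{p,Z}(q)\le C'u$. Your version needs only two cases and gives a cleaner bookkeeping of which constant controls what, at the price of importing Borell's lemma (note that it applies here even though $Z$ need not be symmetric, since the set $\{|y|\le u\}$ is symmetric and convex); the paper's version uses only tools it has already stated. One small point to tidy up: when $\delta=0$ your choice $q=\log(1/(2\delta))$ degenerates, but then $\|\sum_i t_iZ_i\|_q\le u$ for every finite $q$, so letting $q\to\infty$ in Theorem \ref{thm:Paourlr} shows the left-hand probability vanishes (this is the paper's third case). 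Also make sure the constant $\Cr{d1}$ is fixed large enough to absorb the factor $C'$ from the Borell step before the factor $e$ from Markov, which your relabelling does implicitly.
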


	For the Reader's convenience we give a proof of this corollary, which goes along the lines of the proof of Corollary 1.3 in \cite{MR3778221}.

	\begin{proof}
		Define a random variable $S:=\|Z\|_p$.
By the Paley--Zygmund inequality and \eqref{regular_moments} we have for $t\in \er^n$, and $q\ge 1$,
\begin{align}
\notag
\Pr\Biggl(\Bigl|\sum_{i=1}^n t_iZ_i \Bigr|\ge \frac{1}{2}\biggl\|\sum_{i=1}^n t_iZ_i \biggr\|_q\Biggr)
&=\Pr\biggl(\Bigl|\sum_{i=1}^n t_iZ_i \Bigr|^q\ge 2^{-q}\Ex\Bigl|\sum_{i=1}^n t_iZ_i \Bigr|^q\biggr) 
\\
\label{eq:PZbelow}
&\ge 
(1-2^{-q})^2\Biggl(\frac{\bigl\|\sum_{i=1}^n t_iZ_i\bigr\|_q}{\bigl\|\sum_{i=1}^n t_iZ_i\bigr\|_{2q}}\Biggr)^{2q}
\ge e^{-\Cl{c4}q}. 
\end{align}

In order to show \eqref{cond(iv)_log-concave} we consider 3 cases.\\

\textbf{Case 1.} $2u<\sup_{t\in  B_{p'}^n}\|\sum_{i=1}^n t_iZ_i\|_{2}$. Then by \eqref{eq:PZbelow}
\[
\sup_{t\in  B_{p'}^n}\Pr\biggl(\Bigl|\sum_{i=1}^nt_iZ_i\Bigr|\ge u\biggr)
\ge e^{-2\Cr{c4}}
\]
and \eqref{cond(iv)_log-concave} obviously holds if $\Cr{d2}\ge \exp(2\Cr{c4})$. 

\textbf{Case 2.} $\sup_{t\in  B_{p'}^n}\|\sum_{i=1}^n t_iZ_i\|_{2}\le 
2u<\sup_{t\in B_{p'}^n}\|\sum_{i=1}^n t_iZ_i\|_{\infty}$.
Let us then define
\[
q:=\sup\biggl\{r\ge 2\Cr{c4}\colon\ 
\sup_{t\in  B_{p'}^n}\Bigl\|\sum_{i=1}^n t_iZ_i\Bigr\|_{r/\Cr{c4}}\le 2u\biggr\}.
\]
By \eqref{eq:PZbelow} we have

\[
\sup_{t\in  B_{p'}^n}\Pr\biggl(\Bigl|\sum_{i=1}^n t_iZ_i\Bigr|\ge u\biggr)\ge e^{-q}.
\]

By \eqref{regular_moments},  Theorem \ref{thm:Paourlr}, and Chebyshev's inequality
we have
\[
\Pr\bigl(S\ge \Cl{c5}p(\Ex S+u)\bigr)\le
\Pr(S\ge e\|S\|_q)\le e^{-q}
\]
 for $\Cr{c5}$
large enough.
Thus \eqref{cond(iv)_log-concave} holds in this case.

\textbf{Case 3.} $u>\sup_{t\in  B_{p'}^n}\|\sum_{i=1}^n t_iZ_i\|_{\infty}=\|S\|_\infty$. Then
$\Pr(S\ge u)=0$ and \eqref{cond(iv)_log-concave} holds for any $\Cr{d1}\ge 1$.
\end{proof}

	In the proof of   Theorem \ref{main_thm_matrices} we will use Theorem 2.1 from \cite{MR3645120}, which is another version of results provided before by Gu{\'e}don--Rudelson in \cite{MR2304336}, and by Gu{\'e}don--Mendelson--Pajor--Tomczak-Jaegerman in \cite{MR2490210}:
	
	\begin{theorem}[{\cite[Theorem 2.1]{MR3645120}}] \label{thm:2.1Guedon}
		Let $E$ be a Banach space with modulus of convexity of power type $2$ with constant $\lambda$. Let $X_1, \ldots X_m \in E^*$  be independent random vectors, and let $q\ge  2$.  Define
		\begin{equation}\label{eq:defu}
			u:= \sup_{t\in B_E} \Bigl( \sum_{i=1}^m \Ex\bigl| \langle X_i, t \rangle \bigr|^q \Bigr)^{1/q},
		\end{equation}
		and
		\begin{equation}\label{eq:defv}
			v:= \Bigl( \lambda^8 \bigl(T_2(E^*)\bigr)^2 \log m\hspace{0.08cm} \Ex \maxi \|X_i\|_{E^*}^q \Bigr)^{1/q},
		\end{equation}
		where $T_2(E^*)$ is the Rademacher type $2$ constant of $E^*$.
		Then
		\[
			\biggl[\Ex \sup_{t\in B_E} \biggl| \sum_{i=1}^m \Bigl( \bigl| \langle X_i, t\rangle\bigr|^q -  \Ex \bigl| \langle X_i, t\rangle\bigr|^q \Bigr)\biggr| \biggr]^{1/q}\le C (\sqrt{uv} + v) \le 2C(u+v).
		\]
 	\end{theorem}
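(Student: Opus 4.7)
The plan is to prove this Banach-space empirical-process deviation inequality by the classical \emph{symmetrization--contraction--type $2$} scheme, closed by a self-bounding step that produces the mixed $\sqrt{uv}+v$ structure.

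\textbf{Step 1 (Symmetrization).} Setting $Y_i(t):=|\langle X_i,t\rangle|^q-\Ex|\langle X_i,t\rangle|^q$, the standard Gin\'{e}--Zinn symmetrization in $L^q$ gives
\[
\Bigl(\Ex\sup_{t\in B_E}\Bigl|\sum_{i=1}^m Y_i(t)\Bigr|^q\Bigr)^{1/q} \le C\Bigl(\Ex\sup_{t\in B_E}\Bigl|\sum_{i=1}^m \eps_i|\langle X_i,t\rangle|^q\Bigr|^q\Bigr)^{1/q},
\]
with $(\eps_i)$ independent Rademacher signs, independent of the $X_i$.

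\textbf{Step 2 (Contraction).} Condition on $(X_i)$ and set $R:=\maxi\|X_i\|_{E^*}$. For every $t\in B_E$ and every $i$ one has $|\langle X_i,t\rangle|\le R$, so $\phi(x):=|x|^q$ restricted to $[-R,R]$ is $qR^{q-1}$-Lipschitz and vanishes at $0$. Talagrand's Rademacher contraction principle then yields
\[
\Ex_\eps\sup_{t\in B_E}\Bigl|\sum_{i}\eps_i|\langle X_i,t\rangle|^q\Bigr| \lesssim qR^{q-1}\,\Ex_\eps\Bigl\|\sum_{i}\eps_iX_i\Bigr\|_{E^*}.
\]

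\textbf{Step 3 (Type $2$).} Because $E$ has modulus of convexity of power type $2$ with constant $\lambda$, its dual $E^*$ is $2$-smooth with smoothness constant $\lesssim\lambda$, and in particular has Rademacher type $2$ with $T_2(E^*)\lesssim\lambda$. Pisier's inequality therefore gives
\[
\Ex_\eps\Bigl\|\sum_{i}\eps_iX_i\Bigr\|_{E^*} \le T_2(E^*)\Bigl(\sum_{i}\|X_i\|_{E^*}^2\Bigr)^{1/2},
\]
and the corresponding martingale version gives analogous $q$-th moment bounds with an extra $\sqrt{q}$ factor.

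\textbf{Step 4 (Self-bounding).} To convert $(\sum_i\|X_i\|_{E^*}^2)^{1/2}$ into something governed by $u$ and $v$ without picking up a factor of $\sqrt{m}$, I would use $\|X_i\|_{E^*}^2\le R^{2-q}\|X_i\|_{E^*}^q$ (valid for $q\ge 2$) to get $\sum_i\|X_i\|_{E^*}^2\le R^{2-q}\sum_i\|X_i\|_{E^*}^q$, and then control $\sum_i\|X_i\|_{E^*}^q$ through a Talagrand-type moment inequality for the empirical process $\sum_i(\|X_i\|_{E^*}^q-\Ex\|X_i\|_{E^*}^q)$, whose weak variance is bounded in terms of $u$ and whose $L^\infty$ scale is $R^q$. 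After taking $q$-th powers and combining with Steps $1$--$3$, one arrives at an implicit inequality for $A:=(\Ex\sup_t|\sum_iY_i(t)|^q)^{1/q}$ of the form
\[
A \le C_1 v + C_2\sqrt{uv} + \tfrac12 A,
\]
from which $A\lesssim\sqrt{uv}+v$ follows by rearrangement. The $\log m$ factor inside $v$ enters when converting $\Ex R^q$ to higher moments of $R$ via a union bound across the $m$ independent indices.

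\textbf{Main obstacle.} The hardest point is closing the self-bounding loop in Step~$4$: the na\"{\i}ve bound $(\sum_i\|X_i\|_{E^*}^2)^{1/2}\le\sqrt{m}\,R$ is dimension-dependent and ruins the estimate, so one must genuinely interpolate between the $L^2$-scale (where type $2$ operates) and the $L^q$-scale (where $u$ lives). This interpolation, together with the $q$-th moment version of Pisier's martingale inequality, forces several invocations of both the modulus of convexity constant $\lambda$ and the type $2$ constant $T_2(E^*)$, which is what produces the coefficient $\lambda^8(T_2(E^*))^2$ appearing in the definition of $v$.
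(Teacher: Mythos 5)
First, a point of comparison: the paper does not prove this statement at all --- it is quoted verbatim as Theorem 2.1 of \cite{MR3645120}, which in turn refines \cite{MR2304336} and \cite{MR2490210} --- so there is no in-paper proof to measure your outline against, only the known argument from those sources. Judged against that argument, your outline has a genuine gap at its core, in Steps 2--4. Once you apply the scalar contraction principle with the global Lipschitz constant $qR^{q-1}$, the supremum over $t\in B_E$ has been collapsed into the single quantity $\Ex_\eps\bigl\|\sum_i\eps_iX_i\bigr\|_{E^*}$, and after the type-$2$ step everything is expressed through $\bigl(\sum_i\|X_i\|_{E^*}^2\bigr)^{1/2}$, i.e.\ through suprema taken \emph{inside} the sum over $i$. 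The quantity $u$ has the supremum \emph{outside} the sum, and $\sum_i\|X_i\|_{E^*}^q$ can exceed $\sup_t\sum_i|\langle X_i,t\rangle|^q$ by a factor of order $m$, so no manipulation of the norms $\|X_i\|_{E^*}$ alone can reach $u$. Moreover the specific interpolation you propose is reversed: for $q\ge2$ and $\|X_i\|_{E^*}\le R$ one has $\|X_i\|_{E^*}^{2-q}\ge R^{2-q}$, hence $\|X_i\|_{E^*}^2\ge R^{2-q}\|X_i\|_{E^*}^q$, not $\le$. So Step 4 cannot be repaired within this scheme, and the $\sqrt m$ loss you identify as the main obstacle is in fact unavoidable along this route.

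The actual proof keeps the supremum alive through the Rademacher stage. After symmetrization one proves, conditionally on the $X_i$, the key lemma
\[
\Ex_\eps\sup_{t\in B_E}\Bigl|\sum_{i=1}^m\eps_i\bigl|\langle X_i,t\rangle\bigr|^q\Bigr|\le C\lambda^4T_2(E^*)\sqrt{\log m}\ \maxi\|X_i\|_{E^*}^{q/2}\Bigl(\sup_{t\in B_E}\sum_{i=1}^m\bigl|\langle X_i,t\rangle\bigr|^q\Bigr)^{1/2},
\]
which is not a one-shot contraction but a chaining argument over roughly $\log m$ scales, using the entropy (dual Sudakov) estimates available when $E$ has modulus of convexity of power type $2$; this is where the factor $\lambda^4T_2(E^*)=\bigl(\lambda^8T_2(E^*)^2\bigr)^{1/2}$ and the genuine source of $\sqrt{\log m}$ appear --- not a union bound over the $m$ indices or a passage from $\Ex R^q$ to higher moments, as you suggest. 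Taking expectation in $X$, applying Cauchy--Schwarz to separate $\bigl(\Ex\maxi\|X_i\|_{E^*}^q\bigr)^{1/2}$, and writing $\Ex\sup_t\sum_i|\langle X_i,t\rangle|^q\le u^q+S$ with $S$ the left-hand side of the theorem raised to the power $q$, one obtains $S\le Cv^{q/2}(u^q+S)^{1/2}$ and hence $S^{1/q}\le C(\sqrt{uv}+v)$. Your Step 4 correctly anticipates this self-bounding shape, but without the chaining lemma there is no inequality of the right form to close. (A minor additional mismatch: the theorem bounds $\bigl[\Ex\sup_t|\sum_iY_i(t)|\bigr]^{1/q}$, the first moment to the power $1/q$, whereas your symmetrization is stated for the $q$-th moment of the supremum.)
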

		
		We will use  Theorem \ref{thm:2.1Guedon} with $E=\ell_{p'}^n$. In this case $\lambda$ and $T_2(E^*)$ are known.

	\section{Proof of Theorem \ref{main_thm_matrices}}\label{sect:matr_proofs}
	

	The next two lemmas provide estimates of the quantities $u$ and $v$ appearing in Theorem \ref{thm:2.1Guedon} in the case $E=B_{p'}^n$.
	
	\begin{lemma}\label{lemma:maxX_i}
		Assume $p, q, X$, and $Y$ are as in Theorem \ref{main_thm_matrices}. Then
		\begin{align*}
			\Bigl( \Ex \maxi \|X_i \|_p^q \Bigr)^{1/q} \le C(p,q)\Bigl[  \maxi \| A_i \|_p + \log m \ \Ex \maxij |X_{ij}|  \Bigr],
		\end{align*}
		where $C(p,q)$ depends only on $p$ and $q$.	
		\end{lemma}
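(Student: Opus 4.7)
The plan is to apply the weak/strong moment comparison Theorem \ref{thm:Paourlr} to each row $X_i$ but with the parameter $q$ replaced by the larger one $r:=q+\log m$, and then use the elementary bound $\|\maxi\|X_i\|_p\|_q\le m^{1/r}\maxi\|\|X_i\|_p\|_r\le e\,\maxi\|\|X_i\|_p\|_r$. At the end, $M:=\maxi\|A_i\|_\infty$ will be converted into $\Ex\maxij|X_{ij}|$ exactly as in Remark \ref{matrix_reverse_bound}.

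For each $i\le m$, Theorem \ref{thm:Paourlr} gives
\[
\bigl(\Ex\|X_i\|_p^r\bigr)^{1/r}\le Cp\Bigl(\Ex\|X_i\|_p+\sigma_{p,X_i}(r)\Bigr),
\]
so it remains to control the two summands on the right. Jensen's inequality and \eqref{regular_moments} applied coordinatewise to the $Y_{ij}$ give
\[
\Ex\|X_i\|_p\le \bigl(\Ex\|X_i\|_p^p\bigr)^{1/p}=\Bigl(\sum_j|A_{ij}|^p\Ex|Y_{ij}|^p\Bigr)^{1/p}\le \tfrac{\Cr{seminorms} p}{2}\|A_i\|_p.
\]
For the weak moment, fix $t\in B_{p'}^n$; then $\langle t,X_i\rangle=\sum_j t_jA_{ij}Y_{ij}$ is log-concave as a linear image of $Y_i$, and since $p\ge 2$ forces $\|t\|_2\le \|t\|_{p'}\le 1$, isotropicity gives
\[
\bigl(\Ex|\langle t,X_i\rangle|^2\bigr)^{1/2}=\Bigl(\sum_j t_j^2 A_{ij}^2\Bigr)^{1/2}\le \|A_i\|_\infty\le M.
\]
Applying \eqref{regular_moments} to $\langle t,X_i\rangle$ produces $\|\langle t,X_i\rangle\|_r\le \tfrac{\Cr{seminorms} r}{2}M$, hence $\sigma_{p,X_i}(r)\le \tfrac{\Cr{seminorms} r}{2}M$.

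Combining the last three displays,
\[
\bigl(\Ex\|X_i\|_p^r\bigr)^{1/r}\le Cp\bigl(p\,\|A_i\|_p+(q+\log m)M\bigr).
\]
Taking the maximum over $i\le m$ and invoking $\|\maxi\|X_i\|_p\|_q\le e\,\maxi\|\|X_i\|_p\|_r$ together with $q+\log m\le (q/\log 2+1)\log m$ (valid for $m\ge 2$) yields
\[
\Bigl(\Ex\maxi\|X_i\|_p^q\Bigr)^{1/q}\le C(p,q)\bigl[\maxi\|A_i\|_p+\log m\cdot M\bigr],
\]
and the easy argument of Remark \ref{matrix_reverse_bound} (which gives $\Ex\maxij|X_{ij}|\ge(2\Cr{seminorms})^{-1}M$ by evaluating at a maximizing pair $(i_0,j_0)$) converts $M$ into $\Ex\maxij|X_{ij}|$. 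The only subtle point is the choice $r=q+\log m$: a smaller exponent leaves an untamed factor $m^{1/r}$ in the max-bound, while a much larger one inflates $\sigma_{p,X_i}(r)$ and attaches an extra $\log m$ to the $\maxi\|A_i\|_p$ term. The main work is thus packaged inside Theorem \ref{thm:Paourlr} and the regularity \eqref{regular_moments}; this lemma is their direct combined application.
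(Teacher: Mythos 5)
Your argument is correct, and it reaches the stated bound by a genuinely different and more elementary route than the paper. Every step checks out: $X_i$ is log-concave as a linear image of $Y_i$, so Theorem \ref{thm:Paourlr} applies at level $r=q+\log m$; the chain $\bigl(\Ex\maxi\|X_i\|_p^q\bigr)^{1/q}\le\bigl(\Ex\maxi\|X_i\|_p^r\bigr)^{1/r}\le m^{1/r}\maxi\bigl(\Ex\|X_i\|_p^r\bigr)^{1/r}\le e\maxi\bigl(\Ex\|X_i\|_p^r\bigr)^{1/r}$ is valid; the bounds $\Ex\|X_i\|_p\lesssim p\|A_i\|_p$ and $\sigma_{p,X_i}(r)\lesssim r\maxj|A_{ij}|$ follow from \eqref{regular_moments} and isotropicity exactly as in the paper's own \eqref{pom11} and \eqref{pom007}; and the conversion $\maxij|A_{ij}|\le 2C\,\Ex\maxij|X_{ij}|$ is the computation from Remark \ref{matrix_reverse_bound}. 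The paper instead works at the tail level: it passes through the tail-bound corollary \eqref{cond(iv)_log-concave}, a layer-cake decomposition, the Sudakov-minoration-type Lemma \ref{lem:sudakov}, and a reordering of the rows by $\maxj|A_{ij}|$ combined with the function $\varphi(t)=t\min_j\|Y_{ij}\|_t$ to make the row-wise tail probabilities summable. Your device of inflating the moment exponent so that the union bound over $m$ rows costs only $m^{1/r}\le e$ replaces all of that machinery, and for the lemma as stated both methods land on the same $\log m\cdot\Ex\maxij|X_{ij}|$ term. What the paper's heavier approach buys is refinement that is invisible in this worst-case statement but matters in variants: Lemma \ref{lem:sudakov} lower-bounds $\Ex\maxij|X_{ij}|$ by $\max_k\bigl(a_k^*\min_i\|Z_i\|_{\log(k+1)}\bigr)$ rather than by $\maxij|A_{ij}|$ alone, which is how Remark \ref{rem_p^gamma} avoids the extra factor of $\log m$ for variables with $\|Y_{ij}\|_r\sim r^\beta$, $\beta\le 1$; your crude substitution $\sigma_{p,X_i}(r)\le Cr\maxij|A_{ij}|$ with $r\sim\log m$ always pays the full $\log m$. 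For Lemma \ref{lemma:maxX_i} itself, however, your shorter proof is complete and correct.
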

	
	\begin{lemma} \label{lem:moms}
		Assume $p, q, X$, and $Y$ are as in Theorem \ref{main_thm_matrices}. Then 
		\begin{equation} \label{lemma est for sigma}
			\sup_{t\in B_{p'}^n} \biggl( \sum_{i=1}^m \Ex \bigl| \langle X_i, t \rangle \bigr|^q \biggr)^{1/q} \le  \Cr{seminorms} q\maxj \bigl\|A^{(j)}  \bigr\|_q.
		\end{equation}
	\end{lemma}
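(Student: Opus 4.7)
The plan is to reduce the bound to a variance computation using log-concavity, then combine with Minkowski's inequality.

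First I would fix $t \in B_{p'}^n$ and observe that $\langle X_i, t\rangle = \sum_{j=1}^n A_{ij}t_j Y_{ij}$ is a seminorm of the log-concave vector $Y_i$ (as the absolute value of a linear functional). Hence the moment regularity \eqref{regular_moments} applied with exponents $q \ge 2$ gives
\[
\bigl(\Ex |\langle X_i, t\rangle|^q\bigr)^{1/q} \le \Cr{seminorms}\frac{q}{2}\bigl(\Ex |\langle X_i, t\rangle|^2\bigr)^{1/2}.
\]
The isotropicity of $Y_i$ (which forces $\Ex Y_{ij}Y_{ik}=\delta_{jk}$) converts the $L^2$ quantity into a weighted Euclidean norm:
\[
\Ex |\langle X_i, t\rangle|^2 = \sum_{j=1}^n A_{ij}^2 t_j^2.
\]

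Next I would sum the $q$-th moments over $i$ and estimate the resulting quantity
\[
\Bigl(\sum_{i=1}^m \Bigl(\sum_{j=1}^n A_{ij}^2 t_j^2\Bigr)^{q/2}\Bigr)^{2/q}
\]
by Minkowski's inequality in $\ell_{q/2}$ (valid because $q/2 \ge 1$), which yields
\[
\Bigl(\sum_{i=1}^m \Bigl(\sum_{j=1}^n A_{ij}^2 t_j^2\Bigr)^{q/2}\Bigr)^{2/q} \le \sum_{j=1}^n t_j^2 \Bigl(\sum_{i=1}^m A_{ij}^q\Bigr)^{2/q} = \sum_{j=1}^n t_j^2 \bigl\|A^{(j)}\bigr\|_q^2.
\]
Factoring out the maximum and using that $p \ge 2$ (so $p' \le 2$ and consequently $\|t\|_2 \le \|t\|_{p'} \le 1$), we obtain
\[
\Bigl(\sum_{i=1}^m \Bigl(\sum_{j=1}^n A_{ij}^2 t_j^2\Bigr)^{q/2}\Bigr)^{1/q} \le \maxj \bigl\|A^{(j)}\bigr\|_q.
\]

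Combining the three ingredients gives
\[
\Bigl(\sum_{i=1}^m \Ex |\langle X_i, t\rangle|^q\Bigr)^{1/q} \le \Cr{seminorms}\frac{q}{2}\maxj \bigl\|A^{(j)}\bigr\|_q,
\]
uniformly in $t\in B_{p'}^n$, which is the claim (with a constant even slightly better than $\Cr{seminorms} q$). There is no real obstacle here: the only mild point is noticing that $q/2 \ge 1$ permits Minkowski's inequality in that direction, and that the dual exponent condition $p' \le 2$ bridges between the $\ell_{p'}$-ball (where $t$ lives) and the $\ell_2$-ball naturally produced by isotropy.
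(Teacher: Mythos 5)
Your proof is correct and follows essentially the same route as the paper: apply the regularity of seminorms of log-concave vectors \eqref{regular_moments} to reduce to second moments, use isotropy to identify $\Ex|\langle X_i,t\rangle|^2=\sum_j A_{ij}^2t_j^2$, and then bound the resulting double sum by $\maxj\|A^{(j)}\|_q$ using $\|t\|_2\le\|t\|_{p'}\le 1$. The only cosmetic difference is that you interchange the sums via Minkowski's inequality in $\ell_{q/2}$, whereas the paper uses Jensen's inequality (convexity of $x\mapsto x^{q/2}$ against the weights $t_j^2$); both yield the same estimate.
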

	
	In the proof of Lemma  \ref{lemma:maxX_i} we will also need the following estimate:
	
	\begin{lemma}\label{lem:sudakov}
		Assume that $Z$ is an isotropic log-concave vector in $\er^m$. Then for all $1\le k \le m$ and all $a\in \er^m$ we have
		\[
			\Ex \maxi |a_iZ_i| \ge \frac 1{\Cl{d3}}\max_{k\le m} \bigr( a_k^* \min_{i\le m} \|Z_i\|_{\log (k+1)}  \bigl),
		\]
		where $(a_i^*)_{i=1}^m$ denotes the non-increasing rearrangement of $(|a_i|)_{i=1}^m$.
	\end{lemma}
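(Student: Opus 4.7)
By relabeling I may assume $|a_1|\ge|a_2|\ge\cdots\ge|a_m|$, so that $a_k^*=|a_k|$. For each $i\le k$ we then have $|a_iZ_i|\ge|a_k||Z_i|$, and hence
\[
\Ex\max_{i\le m}|a_iZ_i|\ge|a_k|\,\Ex\max_{i\le k}|Z_i|.
\]
The lemma therefore reduces to the coordinate-free estimate
\[
\Ex\max_{i\le k}|Z_i|\ \gtrsim\ \alpha_k:=\min_{i\le m}\|Z_i\|_{\log(k+1)}\qquad\text{for every } 1\le k\le m,
\]
which is what the remainder of the proof will establish.

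To obtain this estimate I would invoke the Sudakov-type minoration for canonical processes of log-concave random vectors proved by Lata\l a in \cite{MR3274967}, applied to the finite test set $T=\{0\}\cup\{\sigma e_i:1\le i\le k,\ \sigma\in\{-1,+1\}\}\subset\er^m$. This set has cardinality $2k+1$ and satisfies $\sup_{t\in T}\langle t,Z\rangle=\max_{i\le k}|Z_i|$. Writing $q=\log(2k+1)$, the minoration controls $\Ex\sup_{t\in T}\langle t,Z\rangle$ from below by a constant multiple of an infimum of norms of the form $\|\langle s-t,Z\rangle\|_q$ taken over suitable pairs $s\ne t$ in (a subset of) $T$. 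The relevant pairwise norms are of three kinds: (i) those involving the origin, equal to $\|Z_i\|_q$; (ii) antipodal pairs, equal to $2\|Z_i\|_q$; and (iii) cross-type pairs $\|Z_i\pm Z_j\|_q$ for $i\ne j$. By the regularity of log-concave moments \eqref{regular_moments} together with $q\sim\log(k+1)$, the kinds (i) and (ii) are $\gtrsim\alpha_k$, while isotropy (which forces $\Cov(Z_i,Z_j)=0$) and monotonicity of $L^r$-norms give $\|Z_i\pm Z_j\|_q\ge\|Z_i\pm Z_j\|_2=\sqrt 2$ for kind (iii).

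\textbf{Main obstacle.} The central technical difficulty is reconciling the cross-type separations, for which the natural bound is only the constant $\sqrt 2$, with the desired lower bound $\alpha_k$, which for heavy-tailed isotropic log-concave coordinates can be as large as a multiple of $\log(k+1)$. I expect to overcome this by selecting within Lata\l a's Sudakov bound a well-chosen subset $T_0\subseteq T$ in which only the favourable separations of kinds (i)--(ii) are active (for instance a subset built around the minimising coordinate $j^*$ that attains $\alpha_k$), exploiting the flexibility in the subset selection inherent in the formulation in \cite{MR3274967}. Should that route prove insufficient in the regime $\alpha_k\gg 1$, an alternative is to combine the Paley--Zygmund estimate \eqref{eq:PZbelow} applied to $Z_{j^*}$ with a second-moment calculation for the count $\sum_{i\le k}\mathbf{1}\{|Z_i|\ge c\alpha_k\}$, upgrading the pointwise lower bound $\Pr(|Z_{j^*}|\gtrsim\alpha_k)\gtrsim(k+1)^{-C}$ to the required expectation estimate $\Ex\max_{i\le k}|Z_i|\gtrsim\alpha_k$.
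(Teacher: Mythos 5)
Your reduction to the coordinate estimate $\Ex\max_{i\le k}|Z_i|\gtrsim\alpha_k:=\min_{i\le m}\|Z_i\|_{\log(k+1)}$ (after ordering $|a_1|\ge\dots\ge|a_m|$ and taking the maximum over $k$) is exactly the paper's reduction and is fine. The problem is that you then leave the coordinate estimate itself unproved: you correctly identify the cross-term obstacle in applying a Sudakov-type minoration to $T=\{0\}\cup\{\pm e_i: i\le k\}$, but neither of your proposed workarounds closes it. For (a): the minoration lower bound is governed by the moment level $q=\log|T_0|$ and requires \emph{all} pairs of the chosen set to be $A$-separated at that level; there is no freedom to keep $2k+1$ points while ignoring the cross pairs, and any subset built around the single coordinate $j^*$ has bounded cardinality, so the minoration then only yields $\|Z_{j^*}\|_{O(1)}\sim 1$, not $\alpha_k$. (Moreover, for a general isotropic log-concave $Z$ — with merely uncorrelated, not independent, coordinates — the full-strength minoration for canonical processes you want to invoke is not available in \cite{MR3274967}; only special cases and weakened forms are proved there.) For (b): Paley--Zygmund applied to $Z_{j^*}$ alone gives $\Pr(|Z_{j^*}|\ge\tfrac12\alpha_k)\ge(k+1)^{-C}$ with $C>1$, hence only $\Ex\max_{i\le k}|Z_i|\gtrsim\alpha_k(k+1)^{-C}$; and the second-moment upgrade via $N=\sum_{i\le k}\ind_{\{|Z_i|\ge c\alpha_k\}}$ needs $\Pr(|Z_i|\ge t,\,|Z_j|\ge t)\lesssim\Pr(|Z_i|\ge t)\Pr(|Z_j|\ge t)$, which uncorrelatedness does not provide — without it $\Ex N^2$ can be of order $k\,(\Ex N)^2$ and Paley--Zygmund for $N$ loses a factor $k$.

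The paper avoids all of this: the inequality you are trying to establish is precisely \cite[Proposition 3.3]{MR3274967}, which states that for a log-concave vector $W$ with uncorrelated coordinates one has $\Ex\max_{1\le i\le k}|W_i|\ge C^{-1}\min_{1\le i\le k}\|W_i\|_{\log(k+1)}$; the paper applies it directly to $W=(a_iZ_i)_{i\le k}$ (a linear image of $Z$, hence log-concave with uncorrelated coordinates) and bounds $\min_{i\le k}\|a_iZ_i\|_{\log(k+1)}\ge a_k\min_{i\le m}\|Z_i\|_{\log(k+1)}$. So the fix is simply to cite that proposition rather than attempt to re-derive it from the general minoration theorem; as written, your argument has a genuine gap at its central step.
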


		In order to prove Theorem \ref{main_thm_matrices}, we repeat the proof scheme from \cite{MR3645120}.
			
	\begin{proof}[Proof of Theorem \ref{main_thm_matrices}]
		We  use  Theorem \ref{thm:2.1Guedon} for $E=\ell_{p'}^n$. Then $\lambda \sim p$ (see \cite[Theorem 5.3]{MR1999201})  and $T_2(E^*) \sim \sqrt p$. Let $u$ and $v$ be given by formulas \eqref{eq:defu} and \eqref{eq:defv}. 
		The triangle inequality, Theorem \ref{thm:2.1Guedon}, Lemma \ref{lemma:maxX_i}, and Lemma \ref{lem:moms}  yield
		\begin{align*} 
			\Ex\|X\|_{p',q}  & \le \bigl(\Ex\|X\|_{p',q}^q \bigr)^{1/q} = \biggl[ \Ex\sup_{t\in B_{p'}^n }\sum_{i=1}^m \bigl| \langle t, X_i \rangle \bigr|^q  \biggr]^{1/q} 
			\\ 
			&
			\le\biggl[ \Ex \sup_{t\in B_{p'}^n} \biggl| \sum_{i=1}^m \Bigl( \bigl| \langle X_i, t\rangle\bigr|^q -  \Ex \bigl| \langle X_i, t\rangle\bigr|^q \Bigr)\biggr| \biggr]^{1/q} + \sup_{t\in B_{p'}^n } \biggl(\Ex \sum_{i=1}^m \bigl| \langle t, X_i \rangle \bigr|^q \biggr)^{1/q}
			\\ & \le C\cdot(u+v) 
			  \\ &\le C(p,q) \Bigl[ \bigl(\log m \bigr)^{1/q}\maxi \bigl\| A_{i} \bigr\|_p + \maxj \bigl\| A^{(j)} \bigr\|_q 
			+ 
			\bigl(\log m \bigr)^{\frac 1q+1} \Ex \maxij |X_{ij}| \Bigr]. 
					\end{align*}
	\end{proof}

	The main  contribution of this article lies in the proofs of Lemmas  \ref{lemma:maxX_i}, \ref{lem:moms}, and \ref{lem:sudakov}.
			
	\begin{proof}[Proof of Lemma \ref{lem:sudakov}]
		We may and do assume that $a_1\ge a_2 \ge \ldots \ge a_m \ge 0$, i.e. $a_i^*=a_i$ for $i\le m$.
		By \cite[Proposition 3.3]{MR3274967} we have for all $k\le m$,
		\[
			\Ex \max_{1\le i\le k} |a_iZ_i| \ge C^{-1} \min_{1\le i\le k} \|a_iZ_i\|_{\log (k+1)} \ge 
			 C^{-1} a_k \min_{1\le i\le m} \|Z_i\|_{\log (k+1)}.
		\]
		Thus
		\[
			\Ex \max_{1\le i\le m} |a_iZ_i| = \max_{1\le k\le m} \Ex \max_{1\le i\le k} |a_iZ_i| \ge  C^{-1} \max_{1\le k\le m} \bigl(a_k \min_{1\le i\le m} \|Z_i\|_{\log (k+1)}\bigr). \qedhere
		\]
	\end{proof}

	\begin{proof}[Proof of Lemma \ref{lemma:maxX_i}]
		We may and do assume that $m\ge2$.
		
		Since we may approximate $A_{ij}$ by nonzero numbers, we may and do assume that $a_{ij}\neq 0$ for all $i,j$.		Let $\Cr{d1},\Cr{d2}$ be the constants from \eqref{cond(iv)_log-concave}, let $\Cr{d3}$ be the constant from Lemma \ref{lem:sudakov}, and recall that $\Cr{seminorms}$ is the constant from \eqref{regular_moments}. We may assume  that all these constants are greater than $1$.
		
		Note that for any $a,b \in \er$ we have $a=(a-b)_+ + a\wedge b$. Thus, by the triangle inequality,
		\begin{multline}\label{pom12}
			\Bigl( \Ex \maxi \|X_i\|_p^q \Bigr)^{1/q}
			\\ \le \biggl( \Ex \maxi \Bigl[\bigl( \|X_i\|_p - \Cr{d1}p\Ex \|X_i\|_p \bigr)^q \ind_{\{\|X_i\|_p\ge \Cr{d1}p\Ex\|X_i\|_p\}} \Bigr]\biggr)^{1/q} + \Cr{d1}p\maxi \Ex\|X_i\|_p.
		\end{multline}
		 Moreover, for every $1\le i\le m$ we have by \eqref{regular_moments} and the isotropicity of $Y_i$, that
		\begin{align}\label{pom11}\nonumber
			\Ex\|X_i\|_p & \le \Bigl(\sum_{j=1}^n \Ex |Y_{ij}|^p |A_{ij}|^p \Bigr)^{1/p} \le     \max_{j\le n} \|Y_{ij}\|_p \| A_{i} \|_p \le  \Cr{seminorms}p \|A_{i} \|_p  \\ & \le   \Cr{seminorms}p \max_{1\le k \le m}\|A_{k} \|_p.
		\end{align}
		Now we pass to the estimation of the fist term of \eqref{pom12}.
		Let 
		\[
			B:= \Cr{seminorms}^2\Cr{d3}\log (m+1)  \ \Ex\maxij |X_{ij}|
		\qquad \text{and } \quad
			\sigma:= (\maxi \sigma_{p, X_i}(2)) \vee B.
		\]
		 By \eqref{cond(iv)_log-concave} we have
		\begin{align} \label{by_parts_lemma} \nonumber
			 \Ex  \maxi & \Bigl[\bigl( \|X_i\|_p  - \Cr{d1}p\Ex \|X_i\|_p \bigr)^q \ind_{\{\|X_i\|_p\ge \Cr{d1}p\Ex\|X_i\|_p\}} \Bigr]
			 \\  \nonumber &
			 \le (2\Cr{d1}pe\sigma)^q+ \int_{2\Cr{d1}pe\sigma}^\infty qu^{q-1} \Pr \Bigl( \maxi \bigl(\|X_i\|_p - \Cr{d1}p\Ex \|X_i\|_p \bigr) \ge u  \Bigr) du
			 \\ \nonumber &
			 \le (2\Cr{d1}pe\sigma)^q + (\Cr{d1}p)^q \sum_{i=1}^m  \int_{2e\sigma}^\infty qu^{q-1} \Pr \bigl( \|X_i\|_p - \Cr{d1}p\Ex \|X_i\|_p \ge \Cr{d1}pu  \bigr) du
			 \\  &
			 \le(2\Cr{d1}pe\sigma)^q + (\Cr{d1}p)^q\Cr{d2} \sum_{i=1}^m  \int_{2e\sigma}^\infty qu^{q-1}  \sup_{\|t\|_{p'}\le 1 } \Pr\biggl(\Bigl| \sum_{j=1}^n  t_j  X_{ij} \Bigr| \ge u\biggr) du.
		\end{align}
		For $u \ge \sup_{\|t\|_{p'}\le 1}\|\sum_{j=1}^n  t_j  X_{ij} \|_{\infty} $ the function we integrate vanishes, so from now on we will consider only $i$'s for which $u< \sup_{\|t\|_{p'}\le 1}\|\sum_{j=1}^n  t_j  X_{ij} \|_{\infty}$. 
		
		Note that if $1\le i \le m$ and $\sup_{\|t\|_{p'}\le 1}\|\sum_{j=1}^n  t_j  X_{ij} \|_{\infty} > u\ge e \sigma \ge e\sigma_{p, X_i}(2)$, then  
		\[
			r:=r(i):=\sup\{s\ge 2: \sigma_{p, X_i}(s) \le u/e \}\in [2,\infty)
		\]
		 and $\sigma_{_p, X_i}(r) = u/e$. Therefore
		\begin{equation} \label{tail_est_lemma}
			\sup_{\|t\|_{p'}\le 1 } \Pr \biggl(\Bigl| \sum_{j=1}^n  t_j  X_{ij} \Bigr|\ge u\biggr)  \le \frac{\sup_{\|t\|_{p'}\le 1 } \| \langle t, X_i \rangle \|_r^r}{u^r} 			= e^{-r}.
		\end{equation}

		Now we will estimate $r$ from below.
		For $t\ge 2$  let 
		\[
			\varphi(t)=t \min_{1\le j\le n}\|Y_{ij}\|_t.
		\]
		 Since $Y_i's$ are identically distributed, $\varphi$ does not depend on $i$. By \eqref{regular_moments}, and the isotropicity of $Y$ we have
		\begin{align} \label{pom007} \nonumber
			\sigma_{p, X_i}(t) \le \sigma_{2, X_i}(t) & \le \Cr{seminorms}t \max_{|x|\le 1} \biggl( \Ex \Bigl(\sum_{j=1}^n  A_{ij}Y_{ij}x_j \Bigr)^2 \biggr)^{1/2} 
			\\ \nonumber &
			 =  \Cr{seminorms}t \max_{|x|\le 1} \biggl( \Ex \Bigl(\sum_{j=1}^n  A_{ij}^2x_j^2 \Bigr)^2 \biggr)^{1/2} 
			\\ &
			=  \Cr{seminorms}t \maxj |A_{ij}| \cdot \|Y_{ij}\|_2  \le  \Cr{seminorms} \varphi(t) \maxj |A_{ij}|.
		\end{align}

		Since we can permute the rows of $A$, we may and do assume that 
		\begin{equation*}
			\maxj |A_{1j}| \ge \ldots \ge \maxj |A_{mj}|.
		\end{equation*}
		Let $j(i)\le n$ be such an index that $|A_{i j(i)}| = \maxj |A_{ij}|$. 
		Lemma \ref{lem:sudakov} applied to $Z_i=Y_{ij(i)}$ and the non-increasing sequence $a_i=|A_{ij(i)}|$ implies 
		\begin{equation*}
			\Ex \maxij |X_{ij}|\ge \Ex \maxi |A_{ij(i)}Y_{i j(i)}|\ge \Cr{d3}^{-1} \bigl(\log(m+1)\bigr)^{-1} \maxi \Bigl( \varphi \bigl(\log(i+1)\bigr) |A_{ij(i)}| \Bigr),
		\end{equation*}
		so for all $i\le m$ we have 
		\[
			B\ge \Cr{seminorms}^2 \varphi(\log (i+1)) |A_{ij(i)}| = \Cr{seminorms}^2 \varphi(\log (i+1))  \maxj |A_{ij}|.
		\]
				Note that by \eqref{regular_moments} for all $r \ge \lambda \ge 2$ we have $\sigma_{p, X_i}(r/\lambda) \ge \sigma_{p, X_i}(r) /(\Cr{seminorms}\lambda)$. Take $\lambda=\sigma_{p , X_i}(r)/ B =u/(Be) \ge 2$. Then by a calculation similar to the one above we get
		\begin{align*}
			\frac ue = \sigma_{p,X_i}(r) \le \frac{\Cr{seminorms}r}2  \maxj |A_{ij}| \le C_1^2r \maxij |A_{ij}|\Ex|Y_{ij}| \le C_1^2r\Ex\maxij|X_{ij}| \le Br,
		\end{align*} 
				so indeed $r\ge \lambda\ge 2$.
				
				Therefore for all $i\le m$ we have
		\begin{equation} \label{varphi_estimate}
			\frac B{\Cr{seminorms}} =\frac1{ \lambda\Cr{seminorms}} \sigma_{p, X_i}(r) \le \sigma_{p, X_i}(r/\lambda) \mathop{\le}^{\eqref{pom007}}  \Cr{seminorms}\varphi \Bigl(\frac r\lambda \Bigr) \maxj |A_{ij}|\le \frac{B\varphi(\frac r\lambda)}{\Cr{seminorms}\varphi(\log(i+1))}.
		\end{equation}
		Since the function $\varphi$ is strictly increasing, the previous inequality yields $r\ge \lambda \log (i+1)$. This together with \eqref{tail_est_lemma} implies that (recall that $\lambda = \frac{u}{Be} \ge 2$)
%
		\begin{multline} \label{sum_lemma}
			  \sum_{i=1}^m   \sup_{\|t\|_{p'}\le 1 }  \Pr\Bigl(\Bigl|\sum_{j=1}^n  t_j  X_{ij} \Bigr|\ge u\Bigr) 
			  \le \sum_{i=1}^m  (i+1)^{-\frac{u}{eB} } 
			   \le
			   2^{-\frac{u}{eB} } + \int_2^\infty x^{-\frac{u}{eB} } dx \le 3\cdot 2^{-\frac{u}{e\sigma} }.
		\end{multline}
		
		Inequalities \eqref{by_parts_lemma}, \eqref{sum_lemma}, and the Stirling formula yield that
		\begin{equation}\label{pom13}
			\biggl( \Ex \Bigl[ \maxi  \bigl( \|X_i\|_p  - \Cr{d1}\Ex \|X_i\|_p \bigr)^q \ind_{\{\|X_i\|_p\ge \Cr{d1}\Ex\|X_i\|_p\}} \Bigr] \biggr)^{1/q}\le C\Cr{d1}\Cr{d2}^{1/q} \sigma pq.
		\end{equation}
		Moreover, by \eqref{regular_moments} 
		\begin{equation*}
			\maxi \sigma_{p, X_i}(2) \le 2 \Cr{seminorms} \maxi \sigma_{p, X_i}(1)\le 2\Cr{seminorms} \maxi \Ex \|X_i\|_p,
		\end{equation*}
		where the second inequality holds since the weak first moment is bounded above by the strong first moment. This together with \eqref{pom12}, \eqref{pom11}, and \eqref{pom13} gives the assertion.
			\end{proof}

	\begin{proof}[Proof of Lemma \ref{lem:moms}]
	Note that if $0\le r\le s$, then for every $x\in \er^n$ we have $\|x\|_{s}\le \|x\|_r$, so we may and do assume $p=2$. By \eqref{regular_moments}, the isotropicity of $Y$,  and the Jensen inequality we have
		\begin{align*}
			\sup_{t\in B_{2}^n} \biggl( \sum_{i=1}^m \Ex \bigl| \langle X_i, t \rangle \bigr|^q \biggr)^{1/q} &
			\le \Cr{seminorms} q \sup_{\|t\|_2 \le 1}\biggl( \sum_{i=1}^m  \Bigl(\Ex \bigl| \langle X_i, t \rangle \bigr|^2 \Bigr)^{q/2} \biggr)^{1/q} 
			\\ &
			=  \Cr{seminorms} q  \sup_{\|t\|_2 = 1}\biggl( \sum_{i=1}^m  \Bigl( \sum_{j=1}^n A_{ij}^2t_j^2 \Bigr)^{q/2} \biggr)^{1/q} 
			\\ & 
			\le 
			  \Cr{seminorms} q  \sup_{\|t\|_2 = 1}\biggl( \sum_{i=1}^m   \sum_{j=1}^n |A_{ij}|^qt_j^2  \biggr)^{1/q}
			 \\ &
			 =  \Cr{seminorms} q  \biggl( \sup_{\|t\|_2 = 1}    \sum_{j=1}^n \bigl\| A^{(j)}  \bigr\|_q^q  t_j^2  \biggr)^{1/q}
			 \\ &
			  =  \Cr{seminorms} q \maxj \bigl\| A^{(j)}  \bigr\|_q. \qedhere
		\end{align*}
	\end{proof}

	\begin{remark}\label{rem_p^gamma}
		By the same reasoning as in the log-concave case, we may prove (using \cite[Corollary 1.3]{MR3778221}, \cite[Theorem 2.1]{MR3335827}, and the claim below instead of \eqref{cond(iv)_log-concave}, Lemma \ref{lem:sudakov} and the previous estimates on $\sigma_{p,X_i}(s)$, respectively) the following. 
		
		Let $X$ be an $m\times n$ random matrix with entries $X_{ij}=A_{ij}Y_{ij}$, where $Y_{ij}$ are independent symmetric random variables such that $\Ex Y_{ij}^2=1$. Assume that for any $r\ge 2$ and any $1\le i\le m$, $1\le j \le n$ we have $\frac{r^{\beta}}L\le\|Y_{ij}\|_r\le L r^{\beta}$ with $\beta \in [\frac12, 1]$. Then for every $p,q \ge 2$ we have
				\begin{align*}
			\Ex  \| X\|_{p',q}
			\le C(p,q, L) \Bigl[ \bigl(\log m \bigr)^{1/q}\maxi \bigl\| A_{i} \bigr\|_p + \maxj \bigl\| A^{(j)} \bigr\|_q 
			+ 
			\bigl(\log m \bigr)^{1/q} \Ex \maxij |X_{ij}| \Bigr].
		\end{align*}
	where $C(p,q,L)$ depends only on $p$, $q$, and $L$. At the end of Section \ref{sect:Gmixtures} we provide another result concerning this type of random matrices (see Corollary \ref{ex_GM}).
		
		As we mentioned, it suffices to prove the claim:
		\begin{equation} \label{remark comp of sums}
			\biggl\| \sum_{j=1}^n t_jY_{ij} \biggr\|_r \le CL r^{\beta} \biggl\| \sum_{j=1}^n t_jY_{ij} \biggr\|_2= CL r^{\beta} \|t\|_2,
		\end{equation}
		where $C$ is an absolute constant, and repeat the proof of Theorem \ref{main_thm_matrices}.

\begin{proof}[Proof of the claim]
		It suffices to consider  $r=2k$, where $k$ is an integer. Let us denote
		\begin{equation*}
			c_{i_1, \ldots ,i_n}:= {i_1+\ldots +i_n  \choose i_1}{i_2+\ldots + i_n \choose i_2} \ldots {i_n \choose i_n}.
		\end{equation*}
		Let $G=(G_{j})_{j=1}^n$ be the standard $n$-dimensional Gaussian vector. Recall that for any $t\in \er^n$ and $r\ge 1$ we have $\|\sum_{j=1}^n t_j G_j\|_r =\|t\|_2 \|G_1\|_r \sim \|t\|_2 \sqrt{r} =\sqrt{r}\|\sum_{j=1}^n t_j Y_{ij}\|_2  $.
		
		By the assumptions on $Y_{i}$ and by the fact that $\beta \ge \frac 12$  we get
 		\begin{align*}
			\biggl\| \sum_{j=1}^n t_j Y_{ij} \biggr\|_{2k}^{2k} & = \sum_{j_1+\ldots + j_n =k} c_{2j_1, \ldots ,2j_n} \Ex Y_{i1}^{2j_1} \cdots \Ex Y_{in}^{2j_n} t_1^{2j_1} \cdots t_n^{2j_n}
			\\ & \le
			 L^{2k} \sum_{j_1+\ldots + j_n =k} c_{2j_1, \ldots ,2j_n} (2j_1)^{2 j_1 \beta} \cdots (2j_n)^{2j_n\beta} t_1^{2j_1} \cdots t_n^{2j_n} 
			 \\ & \le  (2k)^{2k\beta - k} L^{2k} \sum_{j_1+\ldots + j_n =k} c_{2j_1, \ldots ,2j_n} (2j_1)^{ j_1} \cdots (2j_n)^{j_n} t_1^{2j_1} \cdots t_n^{2j_n} 
			 \\ & \le (2k)^{2k\beta - k} (CL)^{2k}  \sum_{j_1+\ldots + j_n =k} c_{2j_1, \ldots ,2j_n} \Ex G_1^{2j_1} \cdots \Ex G_n^{2j_n} t_1^{2j_1} \cdots t_n^{2j_n}
			 \\ & = 
			  (2k)^{2k\beta - k} (CL)^{2k} \biggl\| \sum_{j=1}^n t_j G_j\biggr\|_{2k}^{2k} \le (2k)^{2k\beta}( CL)^{2k}\biggl\|\sum_{j=1}^n t_jY_{ij}\biggr\|_2^{2k} ,
		\end{align*}
		what finishes the proof of \eqref{remark comp of sums}.
\end{proof}
		
		By the claim we get
		\[
			\sigma_{p, cY_i}(q) \le CL q^\beta \sup_{s\in B_{p*}^n} \sqrt{ \sum_{j=1}^n s_j^2c_j^2} = CL q^\beta \maxj|c_j| \le CL^2 \min_{j\le n}\|Y_{ij}\|_q \maxj|c_j|,
		\]
		what allows us to obtain a version of \eqref{pom007} for $\varphi(t):=\min_{\substack{1\le i \le m,\\ 1\le j\le n}}  \|Y_{ij}\|_t.$
			\end{remark}
	
	\section{Estimates of norms of matrices in the case of Gaussian mixtures} \label{sect:Gmixtures}
	
	Let us recall the definition from \cite{MR3846841}, where  the significance of Gaussian mixtures is also described.
	
	\begin{definition}
		A random variable $X$ is called a (centred) Gaussian mixture if there exists a positive random variable $r$ and a standard Gaussian random variable $g$, independent of $r$, such that $X$ has the same distribution as  $rg$.
	\end{definition}
	
	We will work with  matrices of the form $(R_{ij}B_{ij}G_{ij})_{i\le m, j\le n}$ which entries  are Gaussian mixtures.  We additionally assume that $R_{ij}=|Z_{ij}|^\gamma$, where $\gamma\ge0$,  and that the matrix $Z$ is   log-concave and isotropic (considered as a random vector in $\er^{mn}$). It will be clear from the proof, that the corollary below is true also for another type of matrices: $(R_{i}B_{ij}G_{ij})_{i\le m, j\le n}$, where $R_i=|Z_i|^\gamma$, and $(Z_1,\ldots , Z_m)$ is an arbitrary isotropic log-concave random vector.
	
	\begin{corollary}\label{cor:Gmixtures}
	 	Let  $m,n\ge 2$, let $\gamma\ge 0$, let $B=(B_{ij})$ be a deterministic $m\times n $ matrix,  and let $G=(G_{ij})_{i\le m, j\le n}$ be a random matrix which entries are  i.i.d. standard Gaussian variables. Let $X_{ij} = |Z_{ij}|^\gamma B_{ij}G_{ij}$, where $Z=(Z_{ij})_{i\le m, j\le n}$ is a  log-concave and isotropic random matrix independent of $G$. Then for every $p,q \ge 2 \vee \frac 1\gamma$ we have
		\begin{multline*}
			\Ex \| X\|_{p',q}\le C(p,q,\gamma)\biggl[\bigl(\log m \bigr)^{\frac1q+\gamma} \maxi \bigl\| B_{i} \bigr\|_p 
			  + (\log n)^\gamma \maxj \bigl\| B^{(j)} \bigr\|_q \\
			  + 
			(\log m)^{1+\frac1q}\Ex \maxij |X_{ij}| \biggr] .
		\end{multline*}
	\end{corollary}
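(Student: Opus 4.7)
The plan is to condition on $Z$, apply Theorem \ref{main_thm_matrices} to the resulting structured Gaussian matrix, and then control the two remaining expectations by a moment method that uses the regularity \eqref{regular_moments} of the log-concave marginals $Z_{ij}$.

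Write $A_{ij} := |Z_{ij}|^\gamma B_{ij}$. Conditional on $Z$, the matrix $X$ has entries $X_{ij} = A_{ij}G_{ij}$, where $A$ is deterministic and the rows of $G$ are i.i.d.\ standard Gaussian (hence isotropic and log-concave). Theorem \ref{main_thm_matrices} applied in this conditional setting, followed by $\Ex_Z$ and Fubini, yields
\[
\Ex \|X\|_{p',q} \le C(p,q)\Bigl[(\log m)^{1/q}\,\Ex \max_i \|A_i\|_p + \Ex \max_j \|A^{(j)}\|_q + (\log m)^{1+1/q}\,\Ex \max_{ij}|X_{ij}|\Bigr].
\]
It then suffices to show that $\Ex \max_i \|A_i\|_p \le C(p,\gamma)(\log m)^\gamma \max_i \|B_i\|_p$ and symmetrically $\Ex \max_j \|A^{(j)}\|_q \le C(q,\gamma)(\log n)^\gamma \max_j \|B^{(j)}\|_q$; plugging these in gives exactly the stated estimate since $(\log m)^{1/q}\cdot(\log m)^\gamma=(\log m)^{1/q+\gamma}$.

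For the row bound, set $r := \max(p,\log m)$, so that $m^{1/r}\le e$ and $r\ge p$. Each marginal $Z_{ij}$ is log-concave with $\|Z_{ij}\|_1\le \|Z_{ij}\|_2 = 1$, so \eqref{regular_moments} gives $\|Z_{ij}\|_{\gamma r}\le \Cr{seminorms}\gamma r$; this application requires $\gamma r\ge 1$, which is guaranteed by the hypothesis $p\ge 1/\gamma$ since $r\ge p$. Because $r/p\ge 1$, Minkowski's inequality in $L^{r/p}$ applied to $\|A_i\|_p^p = \sum_j |B_{ij}|^p |Z_{ij}|^{\gamma p}$ yields
\[
\bigl(\Ex \|A_i\|_p^r\bigr)^{1/r} = \Bigl\|\sum_{j=1}^n |B_{ij}|^p |Z_{ij}|^{\gamma p}\Bigr\|_{r/p}^{1/p} \le \Bigl(\sum_{j=1}^n |B_{ij}|^p \|Z_{ij}\|_{\gamma r}^{\gamma p}\Bigr)^{1/p} \le (\Cr{seminorms}\gamma r)^{\gamma}\|B_i\|_p,
\]
and a union bound over $i$ gives $\Ex \max_i \|A_i\|_p \le m^{1/r}(\Cr{seminorms}\gamma r)^\gamma \max_i \|B_i\|_p \lesssim_{p,\gamma}(\log m)^\gamma \max_i \|B_i\|_p$. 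The column estimate is proved by the identical argument with $(p,m,\text{rows})$ replaced by $(q,n,\text{columns})$ and $r := \max(q,\log n)$, which is where the condition $q\ge 1/\gamma$ is used.

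The only technical step of substance is the moment estimate on $\|A_i\|_p$, and the conditions $p,q\ge 1/\gamma$ emerge naturally there as precisely what is needed to bound $\|Z_{ij}\|_{\gamma r}$ via \eqref{regular_moments} at the chosen exponent. The remark following the corollary, about replacing $R_{ij}=|Z_{ij}|^\gamma$ by $R_i=|Z_i|^\gamma$ for an arbitrary isotropic log-concave $(Z_1,\dots,Z_m)$, follows by the same scheme, since only the one-dimensional log-concavity of each coordinate $Z_{ij}$ (or $Z_i$) was used in the moment computation.
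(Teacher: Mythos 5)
Your proposal is correct. The first step coincides with the paper's: apply Theorem \ref{main_thm_matrices} conditionally on $Z$ with $Y=G$ and $A_{ij}=|Z_{ij}|^\gamma B_{ij}$, reducing everything to the two estimates $\Ex\maxi\|A_i\|_p\le C(p,\gamma)(\log m)^\gamma\maxi\|B_i\|_p$ and its column analogue. Where you diverge is in proving these estimates. The paper first reduces to $\gamma\ge1$ via the identity $\|(B_{ij}|Z_{ij}|^\gamma)_j\|_p=\|(|B_{ij}|^{1/\gamma}|Z_{ij}|)_j\|_{p\gamma}^\gamma$ (this is where $p\ge 1/\gamma$ enters there), and then invokes the weak--strong moment comparison for $\ell_p$-norms of log-concave vectors (Theorem \ref{thm:Paourlr}) applied to the row $(|B_{ij}|^{1/\gamma}Z_{ij})_j$, bounding the weak moment term by isotropicity and \eqref{regular_moments}. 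You instead bound $\bigl(\Ex\|A_i\|_p^r\bigr)^{1/r}$ directly via the triangle inequality in $L^{r/p}$ with $r=\max(p,\log m)$ together with the one-dimensional regularity $\|Z_{ij}\|_{\gamma r}\le \Cr{seminorms}\gamma r$ (valid since $\gamma r\ge\gamma p\ge1$), and finish with the union bound $m^{1/r}\le e$. This is more elementary: it needs no reduction to $\gamma\ge1$, no deep input beyond \eqref{regular_moments}, and it only uses the log-concavity of the individual entries $Z_{ij}$ rather than of whole rows of $Z$ (which is also why your closing remark about the $R_i=|Z_i|^\gamma$ variant is justified). The paper's route via Theorem \ref{thm:Paourlr} keeps the mean and the weak-moment contributions separate, so it would in principle allow the $(\log m)^\gamma$ factor to multiply only $\maxj|B_{ij}|$ rather than $\|B_i\|_p$; but since the corollary as stated discards this refinement, the two arguments yield the same bound. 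Both arguments use $p,q\ge1/\gamma$ at the analogous place, and your bookkeeping of exponents and of the hypothesis $m,n\ge2$ (needed so that $\log m,\log n$ are bounded below) is sound.
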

	
	\begin{proof}
		 Theorem \ref{main_thm_matrices} applied to  $Y=G$ and $A_{ij}=|Z_{ij}|^\gamma B_{ij}$ yields
		 \begin{multline*}
		 	\Ex \| X\|_{p',q}\le C(p,q)  \Bigl[ \bigl(\log m \bigr)^{1/q}  \Ex \maxi \bigl\| (B_{ij}|Z_{ij}|^\gamma)_j \bigr\|_p + \Ex\maxj \bigl\| (B_{ij}|Z_{ij}|^\gamma)_i \bigr\|_q 
			\\
			+ 
			\bigl(\log m \bigr)^{1+\frac1q}  \Ex \maxij |X_{ij}| \Bigr],
		 \end{multline*}
		 so it suffices to prove that 
		 \begin{equation}\label{aim_Gmixtures}
		 	\Ex \maxi \bigl\| (B_{ij}|Z_{ij}|^\gamma)_j \bigr\|_p\le C(p,\gamma) (\log m)^\gamma  \maxi \bigl\| B_{i} \bigr\|_p 
		\end{equation}
		and 
		\[
			\Ex\maxj \bigl\| (B_{ij}|Z_{ij}|^\gamma)_i \bigr\|_q \le C(q,\gamma) (\log n)^\gamma \maxj \bigl\| B^{(j)} \bigr\|_q
		\] 
		for $p\ge 1\vee \frac 1\gamma$.
		By the  symmetry of assumptions we need only to show \eqref{aim_Gmixtures}.
		
		If $\gamma<1$, then
		\[
				\Ex \maxi \bigl\| (B_{ij}|Z_{ij}|^\gamma)_j \bigr\|_p = 	\Ex \maxi \bigl\| (|B_{ij}|^{1/\gamma}|Z_{ij}|)_j \bigr\|_{p\gamma}^{\gamma} \le 	\Bigl(\Ex \maxi \bigl\| (|B_{ij}|^{1/\gamma}|Z_{ij}|)_j \bigr\|_{p\gamma} \Bigr)^{\gamma},
		\]
		and
		\[
			\bigl\| |B_{i}|^{1/\gamma} \bigr\|_{p\gamma}^\gamma =\bigl\| B_{i} \bigr\|_p,
		\]
		so it suffices to consider only $\gamma \ge 1$ (we used here the assumption that $p\ge \frac 1\gamma$).
		
		Note that for any $u\ge 1$ we have
		\begin{align}\label{pom1_Gmixtures}\nonumber
			\Ex \maxi \bigl\| (B_{ij}|Z_{ij}|^\gamma)_j \bigr\|_p  &= \Ex \maxi \bigl\| (|B_{ij}|^{1/\gamma} Z_{ij})_j \bigr\|_{p\gamma }^\gamma 
			\\ & \nonumber \le
			\Bigl( \Ex \maxi \bigl\| (|B_{ij}|^{1/\gamma} Z_{ij})_j \bigr\|_{p\gamma }^{u\gamma} \Bigr)^{1/u}
			\\ & \nonumber
			\le \Bigl( \Ex \sum_{i=1}^m \bigl\| (|B_{ij}|^{1/\gamma} Z_{ij})_j \bigr\|_{p\gamma}^{u\gamma} \Bigr)^{1/u}
			\\ & \le m^{1/u} \maxi  \Bigl(\Ex \bigl\| (|B_{ij}|^{1/\gamma} Z_{ij})_j \bigr\|_{p\gamma }^{u\gamma} \Bigr)^{1/u}.
		\end{align}
		
		Fix $i\le m$. By  Theorem \ref{thm:Paourlr} applied to $p=p\gamma $, $q=u\gamma$ (recall that $\gamma\ge 1$, so $u\gamma, p\gamma \ge 1$), and $Z_j=|B_{ij}|^{1/\gamma}Z_{ij}$ we have 
		\begin{align}\label{pom2_Gmixtures} \nonumber
			(Cp\gamma)^{-\gamma} \Bigl(\Ex \bigl\| (|B_{ij}|^{1/\gamma} Z_{ij})_j  \bigr\|_{p\gamma }^{u\gamma} & \Bigr)^{1/u} \le \Biggl[ \Ex \bigl\| (|B_{ij}|^{1/\gamma} Z_{ij})_j \bigr\|_{p\gamma }+ \sup_{t\in B_{p'}^n} \biggl\| \sum_{j=1}^n |B_{ij}|^{1/\gamma}Z_{ij}t_j \biggr\|_{u\gamma} \Biggr]^\gamma
			\\ & \le 
			2^{\gamma -1} \Biggl[ \Ex \bigl\| (|B_{ij}|^{1/\gamma} Z_{ij})_j \bigr\|_{p\gamma }^\gamma+ \sup_{t\in B_{p'}^n} \biggl\| \sum_{j=1}^n |B_{ij}|^{1/\gamma}Z_{ij}t_j \biggr\|_{u\gamma}^\gamma\Biggr].
		\end{align}
		Let us  use \eqref{regular_moments} and the assumption $\Ex Z_{ij}^2=1$ to
 estimate the first term in \eqref{pom2_Gmixtures}:		
 		\begin{equation}\label{pom3_Gmixtures}
			 \Ex \Bigl( \sum_{j=1}^n |B_{ij}|^p|Z_{ij}|^{p\gamma}\Bigr)^{1/p} \le \Bigl( \sum_{j=1}^n |B_{ij}|^p \Ex |Z_{ij}|^{p\gamma}\Bigr)^{1/p} \le (\Cr{seminorms} p\gamma)^\gamma \|B_i\|_p.
		\end{equation}
		Recall that $B_{p'}^n\subset B_2^n$. We use again  \eqref{regular_moments} and the isotropicity of $Z_i$ to estimate the second term in  \eqref{pom2_Gmixtures}:
		\begin{align}\label{pom4_Gmixtures} \nonumber
			\sup_{t\in B_{p'}^n} \biggl\| \sum_{j=1}^n |B_{ij}|^{1/\gamma}Z_{ij}t_j \biggr\|_{u\gamma}^\gamma& \le( \Cr{seminorms} u\gamma)^{\gamma} \sup_{t\in B_{2}^n} \biggl\| \sum_{j=1}^n  |B_{ij}|^{1/\gamma}Z_{ij} t_j \biggr\|_2^\gamma
			\\ &\nonumber
			= ( \Cr{seminorms} u\gamma)^{\gamma}  \sup_{t\in B_{2}^n} \biggl( \sum_{j=1}^n |B_{ij}|^{2/\gamma}t_j^2 \biggr)^{\gamma/2} 
			\\ & 
			= ( \Cr{seminorms} u\gamma)^{\gamma} \maxj |B_{ij}| \le ( \Cr{seminorms} u\gamma)^\gamma \|B_i\|_p.
		\end{align}
		
		Take $u=\log m$ and put together \eqref{pom1_Gmixtures}, \eqref{pom2_Gmixtures}, and \eqref{pom3_Gmixtures} to get the assertion.
	\end{proof}
	
	\begin{remark}\label{rem_pom_GM}
		Using \cite[Theorem 1.1]{MR3645120} instead of Theorem \ref{main_thm_matrices} in the proof above yields a slightly better estimate:
		\begin{multline*}
				\Ex \| X\|_{p',q}\le C(p,q) \biggl[ \bigl(\log m \bigr)^{\frac1q+\gamma} \maxi \bigl\| B_{i} \bigr\|_p 
			+ (\log n)^{\gamma} \maxj \bigl\| B^{(j)} \bigr\|_q \\ + 
			(\log m)^{1/q}\Ex \maxij |X_{ij}| \biggr] .	
		\end{multline*}
	\end{remark}
	
	\begin{remark}
		It is clear from the proof of Corollary \ref{cor:Gmixtures} that in the case  $Z_{ij}=G_{ij}'$, where $G_{ij}'$ are i.i.d. standard Gaussian variables, inequality \eqref{aim_Gmixtures} may be slightly improved:
		 \begin{equation}\label{pom_uncond}
		 	\Ex \maxi \bigl\| (B_{ij}|G_{ij}'|^\gamma)_j \bigr\|_p\le C(p,\gamma) (\log m)^{\gamma/2}  \maxi \bigl\| B_{i} \bigr\|_p 
		\end{equation}
		In order to obtain this improvement one should use $\|\langle t,G_i \rangle\|_{u\gamma} \lesssim \sqrt {u\gamma} \|\langle t,G_i \rangle\|_2$ instead of  $\|\langle t,Z_i \rangle\|_{u\gamma} \lesssim u\gamma \|\langle t,Z_i \rangle\|_2$. Therefore, if we additionally use Remark \ref{rem_pom_GM}, the assertion of Corollary \ref{cor:Gmixtures} in the case  $Z_{ij}=G_{ij}'$ (where $G'$ is independent of $G$) will state that
		\begin{multline}\label{eq_Gmixtures_gaussian}
			\Ex \| X\|_{p',q}\le C(p,q,\gamma)\biggl[\bigl(\log m \bigr)^{\frac1q+\frac{\gamma}2} \maxi \bigl\| B_{i} \bigr\|_p 
			  + (\log n)^{\gamma/2} \maxj \bigl\| B^{(j)} \bigr\|_q \\
			  + 
			(\log m)^{1/q}\Ex \maxij |X_{ij}| \biggr] .
		\end{multline}
	\end{remark}
	
	\begin{proof}[Proof of Proposition \ref{prop_matrices}]
		We begin similarly as in the proof of \eqref{aim_Gmixtures} (in the case $\gamma=1$), but we  estimate the second term on the right-hand side of \eqref{pom2_Gmixtures} in a slightly different way, using \eqref{regular_moments}:
		\begin{align*}
			\sup_{t\in B_{p'}^n} \biggl\| \sum_{j=1}^n B_{ij}Y_{ij}t_j \biggr\|_u \le n^{1/u} \sup_{t\in B_{p'}^n} \bigl( \Ex \maxj |t_jB_{ij}Y_{ij}|^u\bigr)^{1/u}\le n^{1/u} \Cr{seminorms} u \Ex \maxj|B_{ij}Y_{ij}|.
		\end{align*}
		We take $u=\log(m\vee n)$ to get the assertion.
	\end{proof}
	
	We may use the result concerning Gaussian mixtures to obtain the estimate similar to the one from Remark \ref{rem_p^gamma},  valid for all $\beta\ge \frac 12$ (not only for $\beta\in[\frac 12, 1]$), but with a slightly worse constants than in Remark \ref{rem_p^gamma}. The proof is based on the fact, that variables $Y_{ij}$ satisfying the moment assumption from Remark \ref{rem_p^gamma} are comparable with a certain Gaussian mixtures.
	
	\begin{corollary} \label{ex_GM}
		Let $m,n\ge 2$, $\gamma\ge \frac 12$, and let $X$ be an $m\times n$ random matrix with entries $X_{ij}=A_{ij}Y_{ij}$, where $Y_{ij}$ are independent symmetric random variables such that $\Ex Y_{ij}^2=1$. Assume that for any $r\ge 2$ and any $1\le i\le m$, $1\le j \le n$ we have $\frac{r^{\beta}}L\le\|Y_{ij}\|_r\le L r^{\beta}$. Then for all $p,q\ge 2$, 
		\begin{multline*}
		\Ex \| X\|_{p',q}\le C(p,q,L,\beta)\biggl[\bigl(\log m \bigr)^{\beta+\frac1q} \maxi \bigl\| A_{i} \bigr\|_p 
			  + (\log n)^{\beta} \maxj \bigl\| A^{(j)} \bigr\|_q \\
			  + 
			(\log m)^{1/q}\sqrt{\log (mn)}\Ex \maxij |X_{ij}| \biggr] .
		\end{multline*}
	\end{corollary}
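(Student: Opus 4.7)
The strategy, as the sentence preceding the statement suggests, is to compare $Y_{ij}$ with a Gaussian mixture of matching moment growth and reduce to the Gaussian-mixture estimate \eqref{eq_Gmixtures_gaussian}. Set $\gamma:=2\beta-1\ge 0$ (which uses $\beta\ge\tfrac12$) and introduce
\[
Z_{ij}:=c_\beta |G'_{ij}|^{\gamma}G_{ij},
\]
where $(G_{ij})$ and $(G'_{ij})$ are two independent arrays of i.i.d.\ standard Gaussians, independent of $(Y_{ij})$, and $c_\beta$ is the normalising constant making $\Ex Z_{ij}^2=1$. By the independence of $G$ and $G'$ together with Stirling, a direct calculation gives $\|Z_{ij}\|_r \sim r^\beta$ for $r\ge 2$ (with implicit constants depending on $\beta$), so $Y_{ij}$ and $Z_{ij}$ are symmetric arrays with matching moments, and in particular matching sub-Weibull tails $\exp(-c(t/L)^{1/\beta})$, up to constants depending on $L$ and $\beta$.

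Applying \eqref{eq_Gmixtures_gaussian} to the Gaussian-mixture matrix $(A_{ij}Z_{ij})$ with this $\gamma$ gives a bound whose three terms carry log-exponents $1/q+\beta-\tfrac12$, $\beta-\tfrac12$, and $1/q$ (the last on $\Ex\maxij|A_{ij}Z_{ij}|$). The matching-tails observation implies $\Ex\maxij|A_{ij}Z_{ij}|\sim\Ex\maxij|X_{ij}|$ up to constants depending on $L$ and $\beta$, so the third term is already in the target form up to the extra $\sqrt{\log(mn)}$. To pass from $(A_{ij}Z_{ij})$ back to $(A_{ij}Y_{ij})$ one uses the moment equivalence of $Y_{ij}$ and $Z_{ij}$ term-by-term: the analogue of Lemma \ref{lem:moms} for $Y$ costs only a constant factor, while the analogue of Lemma \ref{lemma:maxX_i} loses a $\sqrt{\log m}$ factor on the row-norm term (and symmetrically $\sqrt{\log n}$ on the column-norm term), coming from the union bound over $m$ rows (resp.\ $n$ columns) that matches the sub-Weibull tail of $Y_{ij}$ to the Gaussian tail of the outer $G_{ij}$. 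These extra factors upgrade the leading exponents from $\beta-\tfrac12$ to $\beta$, giving exactly the target bound, with the $\sqrt{\log(mn)}$ remaining only on the max-entry term.

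The main obstacle will be establishing the comparison with the correct $\log$ factor on each of the three terms separately: a uniform $\sqrt{\log(mn)}$ correction applied to \eqref{eq_Gmixtures_gaussian} would produce $(\log m)^{1/q+\beta-1/2}\sqrt{\log(mn)}$, which exceeds the target $(\log m)^{1/q+\beta}$ when $n\gg m$, and similarly for the column term. The cleanest route is to retrace the proof of Theorem \ref{main_thm_matrices} for $Y$ directly, replacing the log-concave tail bound \eqref{cond(iv)_log-concave} by the sub-Weibull Chernoff tail coming from $\|Y_{ij}\|_r\le Lr^\beta$, and replacing Lemma \ref{lem:sudakov} by the two-sided moment estimate $\|Y_{ij}\|_{\log(k+1)}\sim_{L,\beta}(\log(k+1))^\beta$ which follows from the hypothesis; the Gaussian mixture $Z_{ij}$ above then appears as the natural auxiliary object through which one applies \eqref{eq_Gmixtures_gaussian} (or Corollary \ref{cor:Gmixtures}) to obtain each constituent bound.
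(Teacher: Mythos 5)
Your high-level strategy --- replace $Y_{ij}$ by a Gaussian mixture with the same moment growth and invoke \eqref{eq_Gmixtures_gaussian} --- is the same as the paper's, but the execution has genuine gaps. First, the comparison step is never actually carried out: matching moments of the individual entries does not by itself let you pass from $\Ex\|(A_{ij}Y_{ij})\|_{p',q}$ to $\Ex\|(A_{ij}Z_{ij})\|_{p',q}$. The tool the paper uses here is \cite[Lemma 4.7]{MR3878726}, a comparison theorem for the expectation of an arbitrary norm of a matrix with independent symmetric entries whose moments dominate each other; it costs only a constant $C(L,\beta)$ and is the indispensable ingredient your argument leaves implicit. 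Second, your choice $\gamma=2\beta-1$ puts you outside the range of validity of Corollary \ref{cor:Gmixtures} and \eqref{eq_Gmixtures_gaussian}, which require $p,q\ge 2\vee\frac1\gamma$: for $\frac12\le\beta<\frac34$ one has $\frac1{2\beta-1}>2$ (and at $\beta=\frac12$ the mixture degenerates entirely), so the estimate cannot be applied for all $p,q\ge2$ as the statement demands. The subsequent ``upgrade'' of the exponents from $\beta-\frac12$ to $\beta$ via union-bound losses of $\sqrt{\log m}$ and $\sqrt{\log n}$ is not a proof: if the comparison costs only constants, the smaller exponents already imply the claim and no upgrade is needed, whereas if it does not, you have given no mechanism; moreover the extra factor in the target sits on the max-entry term as $\sqrt{\log(mn)}$, not on the row and column terms, and your account does not explain where it comes from.

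The paper resolves both issues at once by taking $\gamma=2\beta\ge1$ rather than $2\beta-1$. One first compares $Y_{ij}$ with $Y'_{ij}:=\eps_{ij}|G_{ij}|^{2\beta}$ via \cite[Lemma 4.7]{MR3878726}, and then uses Jensen's inequality in the form $\Ex\|(\eps_{ij}A_{ij}|G'_{ij}|^{2\beta})\|_{p',q}=\sqrt{\pi/2}\;\Ex\|(\Ex|G_{ij}|\,\eps_{ij}A_{ij}|G'_{ij}|^{2\beta})\|_{p',q}\le\sqrt{\pi/2}\;\Ex\|(A_{ij}G_{ij}|G'_{ij}|^{2\beta})\|_{p',q}$ to insert a genuine standard Gaussian factor; the resulting matrix is a Gaussian mixture with $\gamma=2\beta\ge1$, so $2\vee\frac1\gamma=2$ and \eqref{eq_Gmixtures_gaussian} applies for all $p,q\ge2$. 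The inserted Gaussian is precisely what produces the $\sqrt{\log(mn)}$: the last term becomes $(\log m)^{1/q}\Ex\maxij|A_{ij}G_{ij}|\cdot|G'_{ij}|^{2\beta}\le(\log m)^{1/q}\,\Ex\maxij|G_{ij}|\cdot\Ex\maxij|A_{ij}Y'_{ij}|\lesssim(\log m)^{1/q}\sqrt{\log(mn)}\,\Ex\maxij|X_{ij}|$, using independence and the comparison lemma again. Finally, your fallback of retracing the proof of Theorem \ref{main_thm_matrices} with sub-Weibull tails is exactly Remark \ref{rem_p^gamma}, and it is restricted there to $\beta\in[\frac12,1]$ because the weak--strong moment comparison \eqref{cond(iv)_log-concave} and its variant from \cite{MR3778221} are log-concave-type tools unavailable for $\beta>1$; the whole point of Corollary \ref{ex_GM} is to cover general $\beta\ge\frac12$ by routing through Gaussian mixtures instead.
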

	\begin{proof}
		Let $G_{ij}, G_{ij}'$, $i\le m$, $j\le n$, be i.i.d. standard Gaussian variables. Let $(\eps_{ij})$ be i.i.d. symmetric Bernoulli random variable, independent of $G$ and $G'$. Note that $Y_{ij}':=|G_{ij}|^{2\beta }\eps_{ij}$ satisfies $\frac{r^{\beta}}{L'}\le\|Y_{ij}'\|_r\le L' r^{\beta}$ for all $r\ge 2$, with a universal constant $L'$, since  $\|G_{ij}\|_s\sim \sqrt s$ for $s\ge 1$. Let $X' = (X_{ij})$ be the $m\times n$ random matrix with entries $X_{ij}' = A_{ij}Y_{ij}'$. By \cite[Lemma 4.7]{MR3878726} we know that 
		\[
			\frac 1{C(L,L', \beta)} \Ex  \vvvert X' \vvvert  \le\Ex \vvvert X \vvvert \le C(L,L', \beta) \Ex  \vvvert X' \vvvert 
		\] for any norm $\vvvert\cdot\vvvert$ on $m\times n$ real matrices. In particular
		\[
			\Ex \|X\|_{p',q}\le C(L,\beta) \Ex\|X'\|_{p',q}, \quad
		\text{ and } \quad
			\Ex \maxij |X_{ij}'| \le C(L,\beta)  \Ex \maxij |X_{ij}|.
		\]
		Moreover, by the Jensen inequality and by  \eqref{eq_Gmixtures_gaussian}  applied with $\gamma = 2\beta$ we have
		\begin{align*} 
			\Ex \bigl\|(X_{ij}') \bigr\|_{p',q} &  = \Ex \bigl\|(\eps_{ij}A_{ij}|G_{ij}'|^{2\beta}) \bigr\|_{p',q} = \sqrt{\frac \pi 2} \ \Ex \bigl\|\bigl(\Ex|G_{ij}|\eps_{ij}A_{ij}|G_{ij}'|^{2\beta}\bigr) \bigr\|_{p',q} 
			\\  & \le 
			 \sqrt{\frac \pi 2}  \ \Ex \bigl\|\bigl(|G_{ij}|\eps_{ij}A_{ij}|G_{ij}'|^{2\beta}\bigr) \bigr\|_{p',q}
			 =  \sqrt{\frac \pi 2}  \ \Ex_X \Ex_G \bigl\|(A_{ij}G_{ij}|G_{ij}'|^{2\beta}) \bigr\|_{p',q} 
			 \\ & \le C(p,q) \biggl(
			 	(\log m)^{\beta+\frac1q}\maxi \bigl\| A_{i} \bigr\|_p 
			  + (\log n)^{\beta} \maxj \bigl\| A^{(j)} \bigr\|_q 
			  \\ & \hspace{5,5cm}
			  + 
			(\log m)^{1/q}\Ex \maxij |A_{ij}G_{ij}|\cdot|G_{ij}'|^{2\beta} \biggr)
			 \\ & \le C(p,q) \biggl(
			 	(\log m)^{\beta+\frac1q}\maxi \bigl\| A_{i} \bigr\|_p 
			  + (\log n)^{\beta} \maxj \bigl\| A^{(j)} \bigr\|_q 
			  \\ & \hspace{5,3cm}
			  + 
			(\log m)^{1/q}\Ex \maxij |G_{ij}|\Ex \maxij |X_{ij}'|\biggr)  ,
		\end{align*}
		what yields the assertion, since $\Ex \maxij |G_{ij}| \sim \sqrt{\log(mn)}$.
			\end{proof}
	
	\section{The case of unconditional entries}\label{sect_uncond}
		\begin{proof}[Proof of Corollary \ref{thm_uncond_matrices}]
			Since $X$ is unconditional, it has the same distribution as the matrix $(\eps_{ij}X_{ij})_{i\le m, j\le n}$, where $\eps_{ij}$ are i.i.d. symmetric Bernoulli variables independent of $X$. Let $G_{ij}$ be i.i.d. standard Gaussian variables independent of $X$ and $(\eps_{ij})_{i\le m, j\le n}$. Then
		\begin{align*} 
			\Ex \bigl\|(X_{ij}) \bigr\|_{p',q} &  = \Ex \bigl\|(\eps_{ij}X_{ij}) \bigr\|_{p',q} = \sqrt{\frac \pi 2} \ \Ex \bigl\|\bigl(\eps_{ij}X_{ij}\Ex|G_{ij}|\bigr) \bigr\|_{p',q} 
			\\  & \le 
			 \sqrt{\frac \pi 2}  \ \Ex \bigl\|\bigl(\eps_{ij}X_{ij}|G_{ij}|\bigr) \bigr\|_{p',q}
			 =  \sqrt{\frac \pi 2}  \ \Ex_X \Ex_G \bigl\|(X_{ij}G_{ij}) \bigr\|_{p',q} 
			 \\ & \le C(p,q) \biggl(
			 	(\log m)^{1+\frac1q} \Ex_X\Ex_G \maxi \Bigl(\sum_{j=1}^n |X_{ij}G_{ij}|^p \Bigr)^{1/p}
				\\  &\hspace{5,9cm}+ \Ex_X\Ex_G  \maxj \Bigl(\sum_{i=1}^m |X_{ij}G_{ij}|^q \Bigr)^{1/q} \biggr) ,
		\end{align*}
		where in the last step we used Corollary \ref{cor_matrices} to estimate the mean  with respect to $G$. We use \eqref{pom_uncond} with $\gamma =1$ (to $\Ex_G$ in each term above separately) to get the assertion.
		\end{proof}
		
		\begin{remark}
			Using \cite[Theorem 1.1]{MR3645120} instead of Theorem \ref{main_thm_matrices} in the proof above yields a slightly better estimate in Theorem \ref{thm_uncond_matrices}:
			\begin{multline}
						\Ex  \| X\|_{p',q} \le
			 C(p,q) \biggl( (\log m)^{\frac12+\frac1q}\Ex \maxi \Bigl(\sum_{j=1}^n |X_{ij}|^p \Bigr)^{1/p} 
			 \\+ \sqrt{\log n}\Ex  \maxj \Bigl(\sum_{i=1}^m |X_{ij}|^q \Bigr)^{1/q}  \biggr).
		\end{multline}

		\end{remark}
	
	\section{Acknowledgements}
	I would like to thank Rafa{\l} Lata{\l}a for suggestions which helped me to make the presentation  clearer and more reader-friendly.

\bibliographystyle{amsplain}
\bibliography{references}

\end{document}